\documentclass[final,onefignum,onetabnum]{siamart171218}

\usepackage{lipsum}
\usepackage{amsfonts}
\usepackage{graphicx}
\usepackage{epstopdf}
\usepackage{algorithmic}
\ifpdf
  \DeclareGraphicsExtensions{.eps,.pdf,.png,.jpg}
\else
  \DeclareGraphicsExtensions{.eps}
\fi

\PassOptionsToPackage{hidelinks}{hyperref}
\usepackage{booktabs}       
\usepackage{amsfonts}       
\usepackage{amssymb}
\usepackage{mathtools}
\usepackage{nicefrac}       
\usepackage{microtype}      
\usepackage{amsmath}
\usepackage{xspace}
\usepackage{siunitx}
\usepackage[T1]{fontenc}
\usepackage{dsfont}
\usepackage{bm}
\usepackage{pifont}
\usepackage[export]{adjustbox}
\usepackage{relsize}
\usepackage{wrapfig}
\usepackage{xcolor,colortbl}
\usepackage{hhline}
\usepackage{makecell}
\usepackage{physics}
\usepackage{bbm}
\usepackage{pdfpages}
\usepackage{changes}
\usepackage{tabularx}
\usepackage[nameinlink]{cleveref}
\usepackage{changes}
\usepackage{float,lscape}
\usepackage{tikz}
\usepackage{color,soul}
\usepackage{multirow}
\usepackage{braket}
\usepackage{cite}

\def\eg{\emph{e.g.}}
\def\ie{\emph{i.e.}}
\def\ii{\mathrm{i}}
\definecolor{masoncolor}{rgb}{0.98, 0.27, 0.62}

\definecolor{tabred}{RGB}{180, 30, 30}      
\definecolor{tabblue}{RGB}{25, 90, 150}     
\definecolor{tabgreen}{RGB}{30, 120, 30}    
\definecolor{taborange}{RGB}{170, 70, 10}   

\newsiamremark{remark}{Remark}
\newsiamremark{hypothesis}{Hypothesis}
\crefname{hypothesis}{Hypothesis}{Hypotheses}
\newsiamthm{claim}{Claim}

\headers{PF theory
for networks with complex weights}{Y. Tian, M. A. Porter, and L. B\"ottcher}
\title{Generalizing Perron--Frobenius theory and eigenvector-based centralities to networks with complex edge weights\thanks{Submitted to the editors DATE.
\funding{YT and LB acknowledge financial support from hessian.AI.}}}

\author{Yu Tian\thanks{Center for Systems Biology Dresden, 01307 Dresden, Germany; 
Max Planck Institute for the Physics of Complex Systems, 01187 Dresden, Germany;
Max-Planck Institute of Molecular Cell Biology and Genetics,
01307 Dresden, Germany; Cluster of Excellence, Physics of Life, TU Dresden, 01307 Dresden, Germany 
  (\email{yu.tian.research@gmail.com}).}
\and Mason A.\ Porter\thanks{Department of Mathematics, University of California, Los Angeles, CA, 90095, United States of
America; Department of Sociology, University of California, Los Angeles, CA, 90095, United States of
America; Sante Fe Institute, Santa Fe, NM, 87501, United States of America
  (\email{mason@math.ucla.edu}).} \and Lucas B\"ottcher\thanks{Department of Computational Science and Philosophy, Frankfurt School of Finance and Management, 60322, Frankfurt am Main, Germany and Dept.\ of Medicine, University of Florida, Gainesville, FL, 32610, United States of
America
  (\email{l.boettcher@fs.de}).}}

\usepackage{amsopn}

\makeatletter
\newcommand*{\addFileDependency}[1]{
  \typeout{(#1)}
  \@addtofilelist{#1}
  \IfFileExists{#1}{}{\typeout{No file #1.}}
}
\makeatother

\ifpdf
\hypersetup{
  pdftitle={Generalizing Perron--Frobenius theory and eigenvector-based centralities to networks with complex edge weights},
  pdfauthor={Y.\ Tian, M.\ A.\ Porter, and L.\ B\"ottcher}
}
\fi

\begin{document}

\maketitle

\begin{abstract}
    A fundamental concept in linear algebra and its applications to network analysis is the Perron--Frobenius (PF) theorem, which underpins eigenvector-based centrality measures such as eigenvector centrality, PageRank, and hubs and authorities.
    By invoking the PF theorem, we know for strongly connected networks with positive edge weights that the eigenvector corresponding to the largest eigenvalue of the weight matrix yields a well-defined centrality measure (namely, eigenvector centrality).
    Traditional formulations of the PF theorem and associated centrality measures assume that networks have real-valued weights. However, many networks in areas such as quantum information, quantum chemistry, electrodynamics, and machine learning have complex-valued edge weights. In this paper, we study generalizations of the PF theorem to complex-valued matrices, establish connections between these generalizations, and propose generalized eigenvector-based centrality measures to analyzing node importances in networks with complex edge weights. 
    We also prove results about the existence of complex-weighted networks that satisfy generalized PF properties and calculate associated centrality measures for several examples, which we draw from application areas such as electron transport, circuit analysis, mathematical chemistry, and communication networks.
\end{abstract}

\begin{keywords}
  network analysis, complex weights, Perron--Frobenius theory, centrality measures
\end{keywords}

\begin{AMS}
  05C22, 05C50, 68R10, 81Q35, 94C15
\end{AMS}

%
\section{Introduction}
The theory of non-negative matrices is fundamental to the study of networks. It forms the basis for methods like the computation of centrality measures, which have been used in a wide range of disciplines~\cite{pillai2005perron,estrada2010network,gleich2015pagerank,newman2018networks}, including sociology~\cite{katz1953new,bonacich1972a,bonacich1972factoring,freeman1977set,freeman1978centrality,bonacich1987power,bonacich2007some}, computer science~\cite{brin1998}, biology~\cite{lohmann2010eigenvector,negre2018eigenvector}, economics~\cite{schweitzer2009economic,demirer2018estimating}, electrical engineering~\cite{wang2010electrical}, and even sports~\cite{keener1993perron,callaghan2007,wasche2017social}. 
A cornerstone of centrality analysis is \emph{Perron--Frobenius (PF) theory}, which is based on Oskar Perron's foundational work on positive matrices~\cite{perron1907} and the generalization of Perron's work by Georg Ferdinand Frobenius to non-negative matrices~\cite{frobenius1912}. See~\cite{hawkins2008continued} for a detailed account of the history of PF theory.

The traditional PF theorem\footnote{In the present paper, we use the term ``traditional PF theorem'' to refer to the version of the PF theorem for positive matrices.} asserts that a square matrix $\mathbf{A}$ of dimension $N\times N$ with positive entries $a_{ij}$ (with $i,j\in\{1,\ldots,N\}$) has an associated positive real number $r(\mathbf{A}) = \lambda_1$, which is called the \emph{PF eigenvalue}. The PF eigenvalue is simple (\ie, it has an algebraic multiplicity of one) and has a strictly larger magnitude than any other eigenvalue $\lambda_i$ (with $i \in \{2,\ldots,N\}$) of $\mathbf{A}$. That is, $\lambda_1 > |\lambda_i|$ for $i \in \{2,\ldots,N\}$. Therefore, the PF eigenvalue equals the spectral radius
\begin{equation}
    \rho(\mathbf{A}) = \max_{\lambda\in\sigma(\mathbf{A})} |\lambda|\,, 
\label{eq:spectral_radius}
\end{equation}
where $\sigma(\mathbf{A}) = \{\lambda_i\}_{i \in \{1, \ldots, N\}}$ is the set of eigenvalues (\ie, the spectrum) of $\mathbf{A}$. The entries of the corresponding \emph{PF eigenvector} are all positive.\footnote{The PF eigenvalue and PF eigenvector are often referred to as the Perron eigenvalue and Perron eigenvector in the literature.} A PF eigenvalue and its corresponding PF eigenvector also exist for non-negative, irreducible matrices. When one interprets $\mathbf{A}$ as the adjacency matrix of a directed network, the network is strongly connected (\ie, any node is reachable from any other node) if and only if $\mathbf{A}$ is irreducible.

One can use the PF eigenvectors of positive and irreducible non-negative matrices that constitute adjacency or weight matrices of networks as measures of node importance in these networks. The entries of PF eigenvectors are the eigenvector centralities of the corresponding nodes of a network~\cite{newman2018networks}. A variety of other centrality measures, such as PageRank~\cite{gleich2015pagerank,brin1998} and hubs and authorities~\cite{kleinberg1999authoritative}, arise from PF eigenvectors of other matrices. Researchers have leveraged PF theory to extend such eigenvector-based centralities to temporal and multilayer networks~\cite{taylor2017,taylor2021tunable,taylor2023}. Other centralities, such as Katz centrality~\cite{katz1953new}, depend on eigenvectors of a matrix but are not themselves entries of an eigenvector of a matrix.

For many networked systems (such as transportation networks, supply chains, and social networks), it is natural to consider edges with 
non-negative and real-valued weights. However, other networked systems have both positive and negative weights \cite{diazdiaz2025}.
For example, biological systems can involve both activatory (\ie, positive) and inhibitory (\ie, negative) interactions, political networks can include both trust and mistrust relationships, and economic systems often include both cooperative and antagonistic dynamics~\cite{Altafini_2012_opinion,Facchetti_2011_large,Marvel_2010_signedContDyn,tian_2021_role,tian_2022_thesis}. Motivated by such applications, many generalizations of the PF theorem have been developed for matrices with both positive and negative edges (\ie, signed edges)~\cite{berman1979nonnegative,noutsos2006perron,altafini2014predictable,DBLP:conf/globecom/GaoLB15,DBLP:conf/cdc/ShiAB17,shi2019}. 
However, these extensions are still limited to networks with real-valued weights. Many systems in fields such as quantum information, quantum chemistry, electrodynamics, and machine learning involve networks with complex-valued weights~\cite{bottcher2024complex}, for which one cannot directly use either the traditional PF theorem or its generalizations to matrices with signed edges. In the present paper, we study generalizations of PF theorems to complex-valued matrices, establish connections between these generalizations, and propose generalized eigenvector-based centrality measures to analyze node importances in networks with complex edge weights. We prove results about the existence of complex-weighted networks that satisfy generalized PF properties, and we discuss uniqueness conditions {for eigenvectors}.
\subsection{Related Work}
There have been many advances in PF theory, which illustrate how matrix structure (and tensor structure) influences spectral properties in networks~\cite{gautier2023nonlinear}. We discuss a variety of these contributions.

Several researchers have examined extensions of PF theorems to matrices with both positive and negative entries. Noutsos~\cite{noutsos2006perron} studied PF theorems for eventually non-negative (or eventually positive) matrices $\mathbf{A}$, which are real-valued matrices that become non-negative (or positive) when raised to sufficiently large positive integer powers [\ie, there exists an integer $k_0 > 0$ such that $\mathbf{A}^k\ge 0$ (or $\mathbf{A}^k > 0$) entrywise for all $k \ge k_0$] and showed that such matrices have PF-type properties.\footnote{Noutsos~\cite{noutsos2006perron} considered eventual positivity for real matrices (which can have negative entries). A related but more restrictive concept is that of primitive matrices (\ie, non-negative matrices that become strictly positive when raised to a certain positive integer power). The {traditional} PF theorem also applies to primitive matrices~\cite{horn2012matrix}.} 
A related line of research concerns matrices, such as $M$-matrices and $Z$-matrices (which both have nonpositive off-diagonal entries), with specific sign patterns and structural conditions that yield PF-type spectral features~\cite{berman1979nonnegative}.
Rump introduced the concept of a ``sign-real spectral radius'', which is {not restricted to specific sign patterns and generalizes several key properties of the PF eigenvalue of non-negative matrices to a more general class of real-valued matrices}~\cite{rump1997pf1,rump1998pf2,rump1999pf3,rump2001pf4}. 

Researchers have also extended the traditional PF theorem to complex-valued matrices, which arise in areas such as quantum information~\cite{childs2002example,childs2004spatial,boettcher2021classical}, quantum chemistry~\cite{lekishvili1997characterization,estrada2006atomic}, electrodynamics~\cite{paul2007analysis,strub2019modeling}, and machine learning~\cite{noest1987,leung1991complex,kobayashi2010exceptional,zhang2021optical}. Noutsos and Varga~\cite{noutsos2012perron} generalized the traditional PF theorem to complex-valued matrices
by introducing a complex analogue of eventual non-negativity, and Saxena et al.~\cite{saxena2024flows} subsequently built on those generalizations to characterize a consensus process on networks with complex weights.
Very recently, Saxena et al.~\cite{saxena2025laplacianflowscomplex} further developed complex PF-type criteria by introducing a real-dominance condition for consensus in flows in complex-valued Laplacian matrices~\cite{saxena2025laplacianflowscomplex}.
Rump~\cite{rump2003perron} generalized the Perron root of a non-negative matrix to complex-valued matrices by replacing the standard linear eigenvalue problem with a nonlinear eigenvalue problem to define a ``sign-complex spectral radius''.\footnote{More broadly, PF theory has been extended to complex matrices through the framework of ``complex cones''~\cite{rugh2010cones} and also to matrices {$\mathbf{A}$ with non-negative $\mathbf{A}^k$ for some integer 
$k > 0$~\cite{tudisco2015complex}.}}
The usual framework for nonlinear eigenvalue problems generalizes the standard linear eigenvalue problem by introducing nonlinearity in the eigenvalue parameter while keeping a linear dependence on the eigenvector~\cite{dumont2007solution,effenberger2013robust}. By contrast, Rump's approach has a linear-in-magnitude dependence on the eigenvalue but introduces nonlinearity by taking an entrywise absolute value after the action of a matrix on an eigenvector. In Sections~\ref{sec:generalized_pf} and~\ref{sec:comparison_generalizations}, we further discuss the approaches of Noutsos and Varga~\cite{noutsos2012perron} and Rump~\cite{rump2003perron}.

PF theory has also been generalized to networks with polyadic (\ie, ``higher-order'') interactions. Chang et al.~\cite{chang2008perron,chang2013variational} developed a PF theory for non-negative tensors, and Benson~\cite{benson2019three} later leveraged their approach to develop centrality measures for hypergraphs. In related work, Michoel and Nachtergaele~\cite{michoel2012alignment} proposed a generalization of PF theory to hypergraphs and used it to derive spectral-clustering algorithms. More recently, Gautier, Tudisco, and Hein~\cite{gautier2019unifying,gautier2023nonlinear} developed a unifying nonlinear PF theory for non-negative tensors that subsumed many earlier tensor PF results as special cases. 
\subsection{Organization of Our Paper}
Our paper proceeds as follows. In Section~\ref{sec:complex_weights}, we define networks with complex-valued edge weights and overview generalizations of the PF theorem for such networks. In Section~\ref{sec:existence_results}, we use the concept of ``generalized-switching equivalence'' to establish existence results for families of complex-weighted networks that satisfy generalized PF properties. Based on these results, in Section~\ref{sec:generalized_centralities}, we describe ways to generalize eigenvector centrality and other eigenvector-based centralities (\eg, PageRank, hubs, and authorities) that arise from solutions of eigenvalue problems to networks with complex edge weights. 
In Section~\ref{sec:applications}, we {perform calculations that are associated with}
these generalized PF properties and eigenvector-based centralities in several examples, which we draw from application areas such as electron transport, circuit analysis, mathematical chemistry, and communication networks. In Section~\ref{sec:discussion}, we summarize and discuss our results.
In the Supplemental Material, we examine which complex-weight matrices that arise in the description of social and communication networks~\cite{hoser2005eigenspectral,sanchez_infrastructure} satisfy a certain generalized PF property. 
Our code and {scripts to generate all figures are} available at \url{https://gitlab.com/ComputationalScience/perron-frobenius}.
\section{Networks with Complex Edge Weights}
\label{sec:complex_weights}
In Section~\ref{sec:definitions}, we define networks with complex-valued edge weights. In Section~\ref{sec:generalized_pf}, we overview existing generalizations of the PF theorem for such networks. In Section~\ref{sec:comparison_generalizations}, we compare these generalizations to each other.
\subsection{Definitions}
\label{sec:definitions}
We consider networks in the form of weighted and directed networks (\ie, graphs) $G = (V,E,w)$, where $V$ is a set of nodes, $E$ is a set of edges, and $w\colon E\rightarrow \mathbb{C}$ a function that assigns a complex weight to each edge. The number of nodes is $N = |V|$. We describe weighted edges between nodes using two matrices: (1) an adjacency matrix $\mathbf{A} \in \{0,1\}^{N \times N}$; and (2) a weight matrix (\ie, a weighted adjacency matrix) $\mathbf{W} \in \mathbb{C}^{N\times N}$. The entries $a_{ij}$ of the matrix $\mathbf{A}$ are $1$ if there is a directed edge from node $i$ to node $j$, and $a_{ij} = 0$ otherwise. 
We do not consider self-edges or self-weights, so $a_{ii} = w_{ii} = 0$. To represent complex-valued relationships between nodes, we let the weight-matrix entries $w_{ij} = r_{ij}e^{\ii \varphi_{ij}}$ be complex numbers with magnitude $r_{uv}$ and phase 
$\varphi_{ij}$. When a network is undirected, $a_{ij} = a_{ji}$, $r_{ij} = r_{ji}$, and $\varphi_{ij} = \varphi_{ji}$. We set $w_{ij} = 0$ if and only if $a_{ij} = 0$.
\subsection{Generalizations of the Traditional PF Theorem to Matrices with Complex-Valued Entries}
\label{sec:generalized_pf}
According to the traditional PF theorem~\cite{perron1907,frobenius1912}, the weight matrix $\mathbf{W}$ (and hence also the adjacency matrix $\mathbf{A}$) of a strongly connected network with positive weights has a simple positive eigenvalue (the so-called \emph{PF eigenvalue}, which equals the spectral radius $\rho(\mathbf{W})$) that is strictly larger in magnitude than all other eigenvalues [see Eq.~\eqref{eq:spectral_radius}]. 
The corresponding \emph{PF eigenvector} has only positive entries and is the only eigenvector with only positive entries (and hence it is unique). The PF eigenvector of $\mathbf{W}$ (and the PF eigenvector of $\mathbf{A}$) gives one way to measure the centralities of the nodes of a network~\cite{newman2018networks}.
This notion of centrality is known as ``eigenvector centrality''~\cite{bonacich1972factoring}.

The traditional PF theorem does not carry over directly to matrices with complex edge weights. Therefore, one cannot guarantee that there is a PF eigenvalue and a corresponding eigenvector with all positive entries to use as a centrality measure for all strongly connected complex-valued weight matrices
$\mathbf{W}\in\mathbb{C}^{N\times N}$. However, extensions of the traditional PF theorem~\cite{rump2003perron,noutsos2012perron} offer a potential framework to define eigenvector centrality (and some of its generalizations, such as PageRank~\cite{brezinski2006pagerank}) in networks with certain types of complex weight matrices. 

Noutsos and Varga~\cite{noutsos2012perron} proposed PF properties for complex-valued matrices that are based on the definition~\eqref{eq:spectral_radius} of the spectral radius. In Table~\ref{tab:generalized_PF}, we overview these generalized PF properties.

\begin{table}[htbp]
\footnotesize
\centering
\renewcommand{\arraystretch}{1.5}
\begin{tabular}{>{\raggedright\arraybackslash}p{2.5cm}>{\raggedright\arraybackslash}p{9.5cm}}
\toprule
\textbf{Definition} & \textbf{Description} \\
\hline
PF property & A matrix $\mathbf{W}\in \mathbb{C}^{N\times N}$ has the PF property if it has an eigenvalue $\lambda_1 = \rho(\mathbf{W}) > 0$ with an associated nonzero column eigenvector $\mathbf{x} = (x_1,\ldots,x_N)^\top$ whose entries $x_i$ are all non-negative (\ie, $x_i\geq 0$ for $i \in \{1, \ldots, N\}$). The vector $\mathbf{x}$ is called a \textit{right PF eigenvector}. \\
\hline
Strong PF property & A matrix $\mathbf{W}\in \mathbb{C}^{N\times N}$ has the strong PF property if it has a simple eigenvalue $\lambda_1 = \rho(\mathbf{W}) > 0$ that satisfies $\lambda_1 > |\lambda_j|$ for all other eigenvalues $\lambda_j$ of $\mathbf{W}$, where $j\in\{2,\ldots,N\}$. The corresponding eigenvector $\mathbf{x} = (x_1,\ldots,x_N)^\top$ has all positive entries (\ie, $x_i > 0$ for $i \in \{1, \ldots, N\}$) and is called a \textit{strong right PF eigenvector}. \\
\hline
Complex PF property & A matrix $\mathbf{W}\in \mathbb{C}^{N\times N}$ has the complex PF property if it has a positive dominant eigenvalue $\lambda_1$ with an associated nonzero eigenvector $\mathbf{x} = (x_1,\ldots,x_N)^\top$ whose entries all have non-negative real parts (\ie, $\mathrm{Re}(x_i) \geq 0$ for $i \in \{1, \ldots, N\}$). The vector $\mathbf{x}$ is called a \textit{complex right PF eigenvector}. \\
\hline
Strong complex PF property & A matrix $\mathbf{W}\in \mathbb{C}^{N\times N}$ has the strong complex PF property if it has a positive dominant eigenvalue $\lambda_1$ that is simple and satisfies $\lambda_1 > |\lambda_j|$ for all other eigenvalues $\lambda_j$ of $\mathbf{W}$, where $j\in\{2, \ldots, N\}$). The entries of the corresponding eigenvector $\mathbf{x} = (x_1,\ldots,x_N)^\top$ all have positive real parts (\ie, $\mathrm{Re}(x_i) > 0$ for $i \in \{1, \ldots, N\}$). The vector $\mathbf{x}$ is called a \textit{strong complex right PF eigenvector}. \\
\bottomrule
\end{tabular}
\caption{Summary of the PF properties for complex matrices of Noutsos and Varga~\cite{noutsos2012perron}.}
\label{tab:generalized_PF}
\end{table}

The issue of uniqueness of the eigenvectors that are associated with the generalized PF properties was not considered in~\cite{noutsos2012perron}. For a complex-valued normal weight matrix $\mathbf{W}$ with the strong PF property, the eigenvector with all positive entries is unique. The strong PF property guarantees the existence of a dominant eigenvalue $\lambda_1 > 0$ with a corresponding eigenvector with strictly positive entries. Normal matrices\footnote{{A ``normal'' matrix $\mathbf{W}\in\mathbb{C}^{N\times N}$ satisfies $\mathbf{W} \mathbf{W}^\dagger = \mathbf{W}^\dagger \mathbf{W}$, where $\mathbf{W}^\dagger$ denotes the Hermitian conjugate of $\mathbf{W}$~\cite{strang2012linear}.}} have orthogonal eigenvectors~\cite{strang2012linear}, so no other eigenvector can share this property and the positive eigenvector is thus unique. However, matrices that satisfy the PF property, the complex PF property, or the strong complex PF property can possess multiple eigenvectors with the corresponding property, even when its associated network is strongly connected. In these cases, the eigenvectors can have non-negative entries for the PF property, entries with non-negative real parts for the complex PF property, and entries with positive real parts for the strong complex PF property.

To motivate a generalization of the {traditional} PF theorem by Rump~\cite{rump2003perron}, we rewrite the definition of the spectral radius in Eq.~\eqref{eq:spectral_radius} to explicitly incorporate the definition of an eigenvalue problem. We thus obtain
\begin{equation}
    \rho(\mathbf{W}) = \max \left\{ |\lambda| \colon \mathbf{W} \mathbf{x} = \lambda \mathbf{x}\,, \, \lambda\in\mathbb{C}\,, \,
  \mathbf{x}   \in \mathbb{C}^N\!\setminus \! \{0\} 
    \right\}\,.
    \label{eq:rho_ev}
\end{equation}
For complex-valued matrices $\mathbf{W}\in\mathbb{C}^{N\times N}$, the \emph{sign-complex spectral radius}~\cite{rump2003perron} 
\begin{equation}
    \tilde{\rho}(\mathbf{W}) = \max \left\{ |\lambda| \colon |\mathbf{W} \mathbf{x}| = |\lambda \mathbf{x}| \,\,\, \text{for all} \,\,\, \lambda\in\mathbb{C}\,, \, 
    \mathbf{x} \in \mathbb{C}^N\!\setminus \! \{0\} \right\}
\label{eq:sign_compl_spect_rad}
\end{equation}
generalizes Eq.~\eqref{eq:rho_ev}. In accordance with \cite{rump2003perron}, we use an entrywise interpretation {of} absolute values and comparisons of vectors and matrices. For instance, the expression $\mathbf{x}\geq 0$ signifies that the entries $x_i$ of the vector $\mathbf{x} = (x_1,\ldots,x_N)^\top$ satisfy $x_i \geq 0$ for all $i\in\{1,\ldots,N\}$.\footnote{{Using this convention, the sign-complex spectral radius in Eq.~\eqref{eq:sign_compl_spect_rad} is $\tilde{\rho}(\mathbf{W}) = \max \left\{ |\lambda| \colon \! \left|\sum_{j = 1}^N w_{ij} x_j\right| = |\lambda x_i| \,\,\, \text{for all} \,\,\,  i\in\{1,\ldots,N\}\,, \lambda\in\mathbb{C}\,, \, 
    \mathbf{x} \in \mathbb{C}^N\!\setminus \! \{0\} \right\}$.}} Accordingly, with this notation, we emphasize that $|\lambda \mathbf{x}|$ is a vector.

The sign-complex spectral radius resembles the spectral radius $\rho(\mathbf{W})$ for non-negative matrices $\mathbf{W}$. The spectral radius and the sign-complex spectral radius satisfy the inequality 
$\tilde{\rho}(\mathbf{W})\leq \rho(|\mathbf{W}|)$. That is, the sign-complex spectral radius of $\mathbf{W}$ is less than or equal to the spectral radius of $|\mathbf{W}|$.
Additionally, Rump~\cite{rump2003perron} showed that $\tilde{\rho}(\mathbf{W}) = 1$ if $\mathbf{W}\in\mathbb{C}^{N\times N}$ is unitary (\ie, $\mathbf{W} \mathbf{W}^\dagger=\mathbf{W}^\dagger \mathbf{W} = \mathbf{I}$) and that $\tilde{\rho}(\mathbf{W}) = \rho(\mathbf{W})$ if $\mathbf{W}\in\mathbb{C}^{N\times N}$ is normal.

Equation~\eqref{eq:sign_compl_spect_rad} is based on a nonlinear eigenvalue problem that differs from a standard (\ie, linear) eigenvalue problem. However, instead of involving a power series in~$\lambda$, as is common in many nonlinear eigenvalue problems~\cite{dumont2007solution}, nonlinearity appears in Rump's formulation through the application of the map $F(\mathbf{x})\coloneqq|\mathbf{W}\mathbf{x}|$.
Although there exist cone-theoretic PF methods\footnote{A ``cone-theoretic'' PF method is an approach to study eigenvalue problems for linear~\cite{KreinRutman1948,birkhoff1957extensions} and nonlinear~\cite{nussbaum1987hilbert,lemmens2012nonlinear} maps that act on a cone (\eg, the non-negative orthant) to derive conditions for the existence and, in some cases, the uniqueness of eigenvectors.}
that provide existence and uniqueness conditions for eigenvectors of linear and nonlinear self-maps of a cone, these results do not apply directly to Rump's sign-complex spectral radius~\cite{rump2002variational,rump2003perron}. In particular, one cannot reduce the generalized eigenvalue equation $F(\mathbf{x}) = |\lambda \mathbf{x}|$ to an eigenproblem of the form $F(\mathbf{x}) = \lambda \mathbf{x}$, and $F(\mathbf{x})$ is generally \emph{not} order-preserving, as cancellations in $\mathbf{W}\mathbf{x}$ can cause some entries of $|\mathbf{W}\mathbf{x}|$ to decrease even when the entries of $\mathbf{x}$ increase.

There are several variational characterizations of the sign-complex spectral radius $\tilde{\rho}(\mathbf{W})$~\cite{rump2002variational,rump2003perron}, including nonlinear analogues of the classical Collatz--Wielandt formulations~\cite{rump2002variational}. 
These characterizations express $\tilde{\rho}(\mathbf{W})$ as a Collatz--Wielandt-type max--min extremum of entrywise ratios $|(\mathbf{W}\mathbf{x})_i/\mathbf{x}_i|$ for $\mathbf{x}_i\neq 0$. One maximizes 
$|(\mathbf{W}\mathbf{x})_i/\mathbf{x}_i|$ over all orthants), so there is not a
distinguished orthant for general complex matrices. This situation differs from
the role of the non-negative orthant in traditional PF theory.
These expressions provide a theoretical foundation for the computation of the sign-complex spectral radius, but they typically are not straightforward to evaluate efficiently.
Rump~\cite{rump2002variational} proved that every orthant contains a generalized eigenvector and that the extrema in the variational characterizations of $\tilde{\rho}(\mathbf{W})$ are attained.
However, unlike in traditional PF theory, the sign-complex spectral radius $\tilde{\rho}(\mathbf{W})$ does not necessarily correspond to a {strictly positive} eigenvector. Indeed, Rump~\cite{rump2002variational} gave an example in which the absolute weight matrix $|\mathbf{W}|$ (with the absolute value taken entrywise, as discussed previously) is irreducible, yet no vector $\mathbf{x} > 0$ satisfies $|\mathbf{W} \mathbf{x}| \le \tilde{\rho}(\mathbf{W}) \mathbf{x}$. This example highlights that the irreducibility of $|\mathbf{W}|$ is not sufficient to guarantee that the sign-complex spectral radius has an associated strictly positive eigenvector.

\begin{figure}
    \centering
    \begin{tikzpicture}[>=stealth, thick]

\node[circle, draw, fill=pink!80, minimum size=15pt] (1) at (90:2)  {1};
\node[circle, draw, fill=cyan!40, minimum size=15pt] (2) at (210:2) {2};
\node[circle, draw, fill=yellow!60, minimum size=15pt] (3) at (330:2){3};

\draw[->]
   (1) to[bend left=20] node[pos=.42, left] {$e^{\ii \varphi}$} (2);

\draw[->]
   (2) to[bend left=20] node[pos=.51, below] {$e^{\ii r^2 \varphi}$} (3);

\draw[->]
   (3) to[bend left=20] node[pos=.58, right, xshift=1.3mm] {$e^{\ii
r\varphi}$} (1);

\draw[->]
   (2) to[bend left=40] node[pos=.52, left] {$e^{-\ii \varphi}$} (1);

\draw[->]
   (3) to[bend left=40] node[pos=.45, below] {$e^{-\ii r^2 \varphi}$} (2);

\draw[->]
   (1) to[bend left=40] node[pos=.45, right, xshift=1mm] {$e^{-\ii r
\varphi}$} (3);

\end{tikzpicture}
    \caption{A complete and directed triangle network with complex edge weights.
    }
    \label{fig:three_node}
  \end{figure}  

Computing the sign-complex spectral radius\textemdash including identifying suitable generalized eigenvectors\textemdash is usually computationally demanding because it is necessary to solve variational optimization problems in many orthants, so we do not consider this generalization of the spectral radius in our subsequent theorems and examples. Instead, we restrict our attention to the PF generalizations in Table~\ref{tab:generalized_PF}, as proposed by Noutsos and Varga~\cite{noutsos2012perron}. These generalizations are based on the standard spectral radius and are thus compatible with standard analytical and numerical eigenvalue methods (such as diagonalization and power iteration)~\cite{burden2015numerical}.

We now present an example to clarify key definitions. Consider the Hermitian weight matrix
\begin{equation}
    \mathbf{W} = \begin{pmatrix}
		0 & e^{\ii\varphi_1} & e^{-\ii\varphi_2} \\
		e^{-\ii\varphi_1} & 0 & e^{\ii\varphi_3} \\
		e^{\ii\varphi_2} & e^{-\ii\varphi_3} & 0
	\end{pmatrix}\,,
\label{eq:pf_example}
\end{equation}
where $\varphi_1 = \varphi$, $\varphi_2 = {q}\varphi$, and $\varphi_3 = {q}^2 \varphi$, with $\varphi\in[0,2\pi)$ and ${q} \geq 0$ (see Figure~\ref{fig:three_node}). The network that is associated with the weight matrix $\mathbf{W}$ is a complete directed triangle.

In Figure~\ref{fig:pf_example}, we show the magnitude of the spectral radius and highlight the regions of $({q},\varphi)$ space in which the complex PF property is satisfied.
In this example, the sign-complex spectral radius $\tilde{\rho}(\mathbf{W})$ [see Eq.~\eqref{eq:sign_compl_spect_rad}] equals the spectral radius $\rho(\mathbf{W})$ because $\mathbf{W}$ is Hermitian (and hence normal). 
When $\varphi = 0$, the weight matrix $\mathbf{W}$ satisfies the strong PF property. The corresponding spectral radius and eigenvector are $2$ and $(1,1,1)^\top$, respectively.
\begin{figure}
    \centering
    \includegraphics{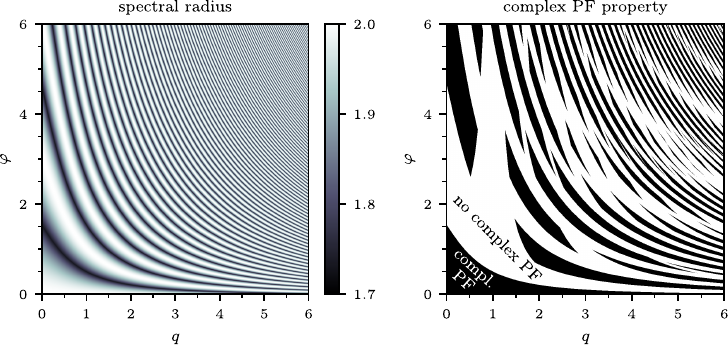}
    \caption{The (a) spectral radius and (b) regions that satisfy the complex PF property for the weight matrix $\mathbf{W}$ [see~\eqref{eq:pf_example}] of the {complete} and directed triangle network with parameters 
    $\varphi_1 = \varphi$, $\varphi_2 = {q}\varphi$, and $\varphi_3 = {q}^2 \varphi$. In (b), the black regions satisfy the complex PF property and the white regions do not.}
    \vspace{-10pt}
    \label{fig:pf_example}
\end{figure}
\subsection{Comparison between Generalizations}
\label{sec:comparison_generalizations}
The generalized eigenvalue problem~\eqref{eq:sign_compl_spect_rad} that was proposed by Rump~\cite{rump2003perron} takes a different perspective from a standard linear eigenvalue problem {$\mathbf W \mathbf x = \lambda \mathbf x$}. In Rump's formulation, the quation $|\mathbf{W} \mathbf{x}| = |\lambda \mathbf{x}|$ requires invariance only of the entrywise magnitudes of the vector $\mathbf{x}$ under the action of the weight matrix $\mathbf{W}$. However, the standard linear eigenvalue problem requires invariance of the complex entries (including both magnitude and phase) of $\mathbf{x}$ because each entry must satisfy $(\mathbf{W} \mathbf{x})_i = \lambda x_i$. Therefore, any eigenvector is also a generalized eigenvector in Eq.~\eqref{eq:sign_compl_spect_rad}, but the converse does not hold.
Additionally, the sign-complex spectral radius is at least as large as the spectral radius. The sign-complex spectral radius {$\tilde{\rho}(\mathbf{W})$} of a normal matrix $\mathbf{W}$ equals its spectral radius {$\rho(\mathbf{W})$}. See Theorem 2.6 in~\cite{rump2003perron}.

In the generalized eigenvector $\mathbf{x}$ that is associated with the sign-complex spectral radius~\cite{rump2003perron}, 
we consider $\abs{\mathbf{x}}\coloneqq(|x_1|,\ldots,|x_N|)^\top$ as a generalization of the PF eigenvector to complex-valued matrices. We refer to this generalized PF eigenvector as the ``Rump PF'' (RPF) vector. Because every complex square matrix has at least one eigenvalue (by the fundamental theorem of algebra) and thus has at least one eigenvector, there is always a corresponding generalized eigenvector and thus an associated RPF vector. 

For the eigenvector $\mathbf{x}$ that is associated with the spectral radius, the choice of a candidate for the complex version of the PF eigenvector depends on the weight matrix $\mathbf{W}$~\cite{noutsos2012perron}.
\begin{itemize}
    \item If $\mathbf{W}$ satisfies the PF property, then $\mathbf{x}$ is real and non-negative; if $\mathbf{W}$ satisfies the strong PF property, then $\mathbf{x}$ is strictly positive. In both cases, one can use $\mathbf{x}$ as the PF vector.  
     \item If $\mathbf{W}$ satisfies the complex PF property but does not satisfy the PF property, then $\Re(\mathbf{x})$ is non-negative. Moreover, if $\mathbf{W}$ satisfies the strong complex PF property, then $\Re(\mathbf{x})$ is strictly positive. In both cases, we take $\Re(\mathbf{x})$ as the PF vector, following the proposal of Noutsos and Varga~\cite{noutsos2012perron}, and we refer to it as the ``Noutsos--Varga PF'' (NVPF) vector.
\end{itemize}

The different generalizations of the PF eigenvector highlight different aspects of networks and their associated weight matrices. The RPF vector uses only the magnitudes of generalized eigenvector entries, whereas the PF eigenvector directly uses the eigenvector $\mathbf{x}$ and the NVPF vector uses the real parts of $\mathbf{x}$. These vectors differ in general, but they coincide in the special case that $\mathbf{W}$ satisfies the PF property. In this case, the PF eigenvector is real and non-negative, so $\Re(\mathbf{x}) = \mathbf{x} = \abs{\mathbf{x}}$. 

When one multiplies an eigenvector by an arbitrary complex scalar, it remains an eigenvector with the same eigenvalue. However, an eigenvector with positive real part does not necessarily still have a positive real part after such a multiplication.
Therefore, if a weight matrix $\mathbf{W}$ satisfies the strong complex PF property with eigenvector $\mathbf{x}$, then the NVPF vector $\Re(\mathbf{x})$ is strictly positive by definition. One can then determine the phase factors $e^{\ii\varphi}$ for which $\Re(e^{\ii\varphi}\mathbf{x})$ remains strictly positive. Analogously, if a weight matrix $\mathbf{W}$ satisfies the complex PF property, one can determine the phase factors $e^{\ii\varphi}$ for which $\Re(e^{\ii\varphi}\mathbf{x})$ remains non-negative. This allows us to characterize a network in terms of the set of complex phase factors that preserve the non-negativity (or positivity) of the NVPF vector. The study of such complex phase factors is relevant in the Noutsos--Varga PF setting~\cite{noutsos2012perron}. It does not apply to Rump's generalization of PF theory~\cite{rump2003perron}, as his definition of the sign-complex spectral radius~\eqref{eq:sign_compl_spect_rad} is invariant under complex scalar multiplication.

A network and its associated weight matrix may not have PF or NVPF vectors. (There is always an RPF vector~\cite{rump2002variational}.) Noutsos and Varga~\cite{noutsos2012perron} showed that the complex PF property holds for non-nilpotent weight matrices $\mathbf{W}$ (\ie, when $\mathbf{W}^k \ne \mathbf{0}$ for any positive integer $k$) if the real part of $\mathbf{W}$ is eventually non-negative. See Theorem 2.3 in \cite{noutsos2012perron}. 
\section{Networks with Generalized PF Properties}
\label{sec:existence_results}
In this section, we study which types of networks satisfy {the generalized PF properties in Table~\ref{tab:generalized_PF}}. To do so, we (1) define the notion of \emph{generalized-switching equivalence} as a method to classify networks based on their eigenvalues and (2) examine ``generalized-switching functions'' to determine specific requirements on eigenvectors.
\subsection{Generalized-Switching Equivalence}
Networks in the same switching-equivalence class have the same eigenvalues, so the notions of switching operations and switching equivalence have been used widely to classify undirected networks~\cite{lange2015magL,zaslavsky1982signed}.
We consider directed networks, so we extend these notions. To do so, we first introduce the notion of a generalized signature, which in turn allows us to define a generalized-switching operation.

\begin{definition}[generalized signature]
    Let $G$ be a directed network, and let $\Gamma$ be a group. A \emph{generalized signature} of $G$ is a map $s\colon E \to \Gamma$. For an edge $e = (i,j)\in E$, we write $s_{ij} \coloneqq s(e) = s((i,j))$.
    We use $s_1$ to denote the trivial generalized signature, which satisfies $s_1(e) \equiv \mathrm{id}$, where $\mathrm{id}$ is the identity element of $\Gamma$. 
    \label{def:signature}
\end{definition}

One can associate the generalized signature of a network with weights of the network's edges. The generalized signature can encode either all of the weight information or only specific information that one obtains from edge weights.
For example, in a network with real edge weights, one can let
$\Gamma = (\mathbb{R}\backslash\{0\},\times)$ and define $s_{ij}$ to be the weight of the edge $e = (i,j)\in E$. Similarly, in a network with complex edge weights, one can take $\Gamma = (\mathbb{C}\backslash\{0\},\times)$ and let $s_{ij}$ be the complex weight of $(i,j) \in E$.
As an example of encoding only partial information in a generalized signature, one can include only the signs of real-valued edges by letting
$\Gamma = (\{1,-1\},\times)$ and defining $s_{ij}$ to be the sign of the edge
$(i,j)\in E$. This last construction coincides the notion of signatures in undirected signed networks \cite{zaslavsky1982signed,Atay2020signedCheeger} (though the edges $(i,j)$ and $(j,i)$ can have different signs in a directed network).

We define the notion of ``induced generalized signature'' for networks with complex edge weights.

\begin{definition}[induced generalized signature]
Let $G$ be a directed complex-weighted network, and let $\Gamma = (\{x\in\mathbb{C} \colon |x| = 1\}, \times)$. The \emph{induced generalized signature} of $G$ is the generalized signature $s$ with entries
\begin{align}
    s_{ij} = e^{\ii\varphi_{ij}} \,\,\,  \text{for all} \,\,\,  e = (i,j)\in E\,,
\end{align}
where $w_{ij} = r_{ij}e^{\ii\varphi_{ij}}$ is the weight of the edge $(i,j)$.
    \label{def:complex-signature}
\end{definition}

Using the notion of a generalized signature, we now define the generalized-switching function and its associated generalized-switching equivalence.

\begin{definition}[generalized-switching function and generalized-switching equivalence]
    Let $G$ be a directed network with generalized signature $s: E\to \Gamma$. For a function $\tau\colon V\to \Gamma$, the generalized signature $s^{\tau}\colon E\to \Gamma$ satisfies
    \begin{equation}
        s^{\tau}(e) = \tau(i)s(e)\tau(j)^{-1} \,\,\, \text{for all}\,\, \, e = (i,j)\in E\,.
    \label{eq:s_tau_e}
    \end{equation}
    We refer to the function $\tau$ as a \emph{generalized-switching function}. The generalized signatures $s$ and $s'$ are \emph{generalized-switching equivalent} if there exists a generalized-switching function $\tau$ such that $s' = s^\tau$, which entails that
    \begin{align}
        s'(e) = \tau(i)s(e)\tau(j)^{-1} \,\,\, \text{for all}\,\,\, e = (i,j)\in E\,.
    \end{align}
    \label{def:switching}
\end{definition}
Generalized switching gives an equivalence relation on the set of generalized signatures with the same edge set. Therefore, one can classify each generalized signature as a member of one switching equivalence class. 

Undirected networks whose signatures belong to the same switching equivalence class have the same eigenvalues~\cite{zaslavsky2013matrices}. Proposition \ref{pro:switch-eigen-invar} guarantees that this invariance property extends to generalized signatures on directed networks.

{\begin{proposition}[eigenvalue invariance]
       Let $G$ be a directed network with edge set $E$ and induced generalized signature $s$, and let $G'$ be a directed network with the same edge set $E$ and (potentially different) induced generalized signature $s'$. If $s$ and $s'$ are generalized-switching equivalent, then their associated weight matrices $\mathbf{W}_{s}$ and $\mathbf{W}_{s'}$, which have the entries
    \begin{align*}
  	(\mathbf{W}_{s})_{ij} &= \begin{cases} s((i,j)) \,, \quad  (i,j)\in E \\ 0\,, \qquad \qquad \textrm{otherwise}\,, \end{cases}  \\
	(\mathbf{W}_{s'})_{ij} &= \begin{cases} {s'}((i,j)) \,, \quad  (i,j)\in E \\ 0\,, \qquad \qquad \textrm{otherwise}\,, \end{cases}
    \end{align*}
    have the same eigenvalues.
    Furthermore, if $\mathbf{x} = (x_i)$
    is an eigenvector of $\mathbf{W}_{s}$, then $\mathbf{x}' = (\tau(i)x_i)$ is an eigenvector of $\mathbf{W}_{s'}$, where $\tau$ is a generalized-switching function that transforms $s$ to $s'$ [see Eq.~\eqref{def:switching}]. 
    \label{pro:switch-eigen-invar}
\end{proposition}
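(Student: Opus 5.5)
The plan is to show that generalized switching acts on weight matrices as a diagonal similarity transformation. Both claims of Proposition~\ref{pro:switch-eigen-invar} then follow from the fact that similar matrices have the same characteristic polynomial, together with the explicit form of the similarity.

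First, let $\tau\colon V\to\Gamma$ be a generalized-switching function with $s'=s^\tau$, where $\Gamma=(\{x\in\mathbb{C}\colon |x|=1\},\times)$ as in Definition~\ref{def:complex-signature}. I define the diagonal matrix $\mathbf{D}=\mathrm{diag}(\tau(1),\ldots,\tau(N))$. Every $\tau(i)$ lies on the unit circle and so is nonzero. Hence $\mathbf{D}$ is invertible, with $\mathbf{D}^{-1}=\mathrm{diag}(\tau(1)^{-1},\ldots,\tau(N)^{-1})$; in fact $\mathbf{D}$ is unitary, since $\tau(j)^{-1}=\overline{\tau(j)}$.

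The key step is the entrywise identity $\mathbf{W}_{s'}=\mathbf{D}\mathbf{W}_s\mathbf{D}^{-1}$. For $(i,j)\in E$, Eq.~\eqref{eq:s_tau_e} gives
\begin{equation*}
(\mathbf{D}\mathbf{W}_s\mathbf{D}^{-1})_{ij}=\tau(i)\,s((i,j))\,\tau(j)^{-1}=s'((i,j))=(\mathbf{W}_{s'})_{ij}\,.
\end{equation*}
For $(i,j)\notin E$, both sides vanish: $(\mathbf{W}_s)_{ij}=0$, and $G$ and $G'$ share the same edge set $E$. Multiplication of $\Gamma$ is ordinary complex multiplication, so these are genuine matrix products.

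With the similarity in hand, $\det(\mathbf{W}_{s'}-\lambda\mathbf{I})=\det\bigl(\mathbf{D}(\mathbf{W}_s-\lambda\mathbf{I})\mathbf{D}^{-1}\bigr)=\det(\mathbf{W}_s-\lambda\mathbf{I})$. The two matrices therefore have the same eigenvalues, with the same algebraic multiplicities. For the eigenvector claim, suppose $\mathbf{W}_s\mathbf{x}=\lambda\mathbf{x}$ with $\mathbf{x}\neq 0$, and set $\mathbf{x}'=\mathbf{D}\mathbf{x}=(\tau(i)x_i)$. Then
\begin{equation*}
\mathbf{W}_{s'}\mathbf{x}'=\mathbf{D}\mathbf{W}_s\mathbf{D}^{-1}\mathbf{D}\mathbf{x}=\lambda\mathbf{D}\mathbf{x}=\lambda\mathbf{x}'\,,
\end{equation*}
and $\mathbf{x}'\neq 0$ because $\mathbf{D}$ is invertible.

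I do not expect a serious obstacle. The only point needing care is the orientation convention, $\tau(i)$ on the left and $\tau(j)^{-1}$ on the right. This convention is exactly what makes the similarity $\mathbf{D}\mathbf{W}_s\mathbf{D}^{-1}$ rather than $\mathbf{D}^{-1}\mathbf{W}_s\mathbf{D}$, and hence makes the transformed eigenvector $\mathbf{D}\mathbf{x}$ rather than $\mathbf{D}^{-1}\mathbf{x}$, consistent with the stated form $\mathbf{x}'=(\tau(i)x_i)$.
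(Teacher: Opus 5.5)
Your proposal is correct and follows essentially the same route as the paper. Both arguments establish the diagonal similarity $\mathbf{W}_{s'}=\mathbf{D}(\tau)\mathbf{W}_s\mathbf{D}(\tau)^{-1}$ and then read off equal spectra and the eigenvector map $\mathbf{x}\mapsto\mathbf{D}(\tau)\mathbf{x}$; you add a few details the paper leaves implicit, namely the entrywise check on non-edges, the invertibility of $\mathbf{D}(\tau)$, and the equality of characteristic polynomials.
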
}

\begin{proof}
    Because $s$ and $s'$ are generalized-switching equivalent, there is a generalized-switching function $\tau\colon V\to \Gamma$ such that 
    \begin{align*}
         s'(e) = \tau(i)s(e)\tau(j)^{-1} \,\,\,  \text{for all}\,\, e = (i,j)\in E\,.
    \end{align*}
    Therefore,
    \begin{equation}
        \mathbf{W}_{s'} = \mathbf{D}(\tau)\mathbf{W}_s\mathbf{D}(\tau)^{-1}\,,
        \label{eq:w_tau_transform}
    \end{equation}
    where $\mathbf{D}(\tau)$ is the diagonal matrix with entries $(\mathbf{D}(\tau))_{ii} = \tau(i)$.
    An eigenvector $\mathbf{x}$ of $\mathbf{W}_s$ with eigenvalue $\lambda$ satisfies $\mathbf{W}_s\mathbf{x} = \lambda\mathbf{x}$, so  $\mathbf{W}_{s'}(\mathbf{D}(\tau)\mathbf{x}) = \mathbf{D}(\tau)\mathbf{W}_s\mathbf{x} = \lambda\mathbf{D}(\tau)\mathbf{x}$. That is, $\mathbf{D}(\tau)\mathbf{x}$ is an eigenvector of $\mathbf{W}_{s'}$ with the 
    eigenvalue $\lambda$. 
    Therefore, (1) $\mathbf{W}{(\tau)}$ and $\mathbf{W}$ are similar and have the same eigenvalues and (2) their eigenvectors are related via $\mathbf{D}(\tau)$.
\end{proof}

{Proposition~\ref{pro:switch-eigen-invar} applies to any generalized signature on a directed network. As a specific case, we consider complex-weighted networks with induced generalized signatures (see Definition~\ref{def:complex-signature}).
{The weight $w_{ij} = r_{ij}e^{\ii\varphi_{ij}}$ of edge $(i,j)$ in a network $G$ has both a magnitude $r_{ij}$ and a phase $\varphi_{ij}$. Let $\mathbf{W}$ denote the weight matrix whose
nonzero entries are the complex edge weights [\ie,
$(\mathbf{W})_{ij} = w_{ij}$ for each edge $(i,j)$], and recall that the induced
generalized signature encodes only phase information. 
Therefore, in addition to the condition on the induced generalized signature in Proposition~\ref{pro:switch-eigen-invar}, we also require a condition on the
magnitudes. In particular, for the weight matrix $\mathbf{W}$ of $G$ to be similar to the weight matrix $\mathbf{W}'$ of a network $G'$ with the same
edge set and edge weights $w'_{ij} = r'_{ij}e^{\ii\varphi'_{ij}}$, it is necessary (see Corollary~\ref{cor:switch-eigen-invar-complex}) that $r_{ij} = r'_{ij}$ for each edge $(i,j)$.

\begin{corollary}[eigenvalue invariance for induced generalized signatures]
   {Let $G$ be a directed and complex-weighted network with edge set $E$ and induced generalized signature $s$, and let $G'$ be a directed and complex-weighted network with the same edge set $E$ and (potentially different) induced generalized signature $s'$.} Suppose for each {edge} $(i,j)\in E$ that the {weights of the edges in $G$ and $G'$ have the same magnitudes}.
    If $s$ and $s'$ are {generalized-switching} equivalent, then the
    weight matrices $\mathbf{W}$ and $\mathbf{W}'$ have the same eigenvalues.
    Furthermore, if $\mathbf{x} = (x_i)$
    is an eigenvector of $\mathbf{W}$, then $\mathbf{x}' = (\tau(i)x_i)$ is an eigenvector of $\mathbf{W}'$, where $\tau$ is a {generalized-switching} function that transforms $s$ to $s'$ [see Eq.~\eqref{def:switching}]. 
    \label{cor:switch-eigen-invar-complex}
\end{corollary}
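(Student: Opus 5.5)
The plan is to reduce the statement to the same diagonal-similarity argument used in the proof of Proposition~\ref{pro:switch-eigen-invar}. The only new ingredient is that the weight matrices now also carry magnitudes $r_{ij}$, which are shared by $G$ and $G'$.

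First, fix a generalized-switching function $\tau\colon V\to\Gamma$ with $s'=s^\tau$, where $\Gamma$ is the unit-circle group of Definition~\ref{def:complex-signature}. By Definition~\ref{def:switching}, this means $e^{\ii\varphi'_{ij}}=\tau(i)e^{\ii\varphi_{ij}}\tau(j)^{-1}$ for every $(i,j)\in E$. Multiplying both sides by the common magnitude $r_{ij}=r'_{ij}$ gives
\begin{equation*}
w'_{ij}=\tau(i)\,w_{ij}\,\tau(j)^{-1}\quad\text{for all } (i,j)\in E.
\end{equation*}
For pairs $(i,j)\notin E$, both $w_{ij}$ and $w'_{ij}$ vanish, because the networks share the edge set and $w_{ij}=0$ exactly when $a_{ij}=0$. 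Hence the identity above holds for every entry.

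Next, let $\mathbf{D}(\tau)=\mathrm{diag}(\tau(1),\ldots,\tau(N))$. Each $\tau(i)$ lies on the unit circle, so $\mathbf{D}(\tau)$ is invertible (indeed unitary), with $\mathbf{D}(\tau)^{-1}=\mathrm{diag}(\tau(i)^{-1})$. The entrywise identity is then exactly the matrix statement
\begin{equation*}
\mathbf{W}'=\mathbf{D}(\tau)\,\mathbf{W}\,\mathbf{D}(\tau)^{-1}.
\end{equation*}
So $\mathbf{W}$ and $\mathbf{W}'$ are similar and have the same characteristic polynomial, and therefore the same eigenvalues with the same algebraic multiplicities.

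Finally, if $\mathbf{W}\mathbf{x}=\lambda\mathbf{x}$, then
\begin{equation*}
\mathbf{W}'\,\mathbf{D}(\tau)\mathbf{x}=\mathbf{D}(\tau)\mathbf{W}\mathbf{x}=\lambda\,\mathbf{D}(\tau)\mathbf{x}.
\end{equation*}
Moreover, $\mathbf{D}(\tau)\mathbf{x}=(\tau(i)x_i)$ is nonzero whenever $\mathbf{x}$ is, since $\mathbf{D}(\tau)$ is invertible. This gives the eigenvector statement.

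Alternatively, one can write $\mathbf{W}=\mathbf{R}\circ\mathbf{W}_s$ and $\mathbf{W}'=\mathbf{R}\circ\mathbf{W}_{s'}$ as Hadamard products with the common magnitude matrix $\mathbf{R}=(r_{ij})$. Conjugation by a diagonal matrix commutes with taking a Hadamard product with $\mathbf{R}$, so the claim also follows from the identity $\mathbf{W}_{s'}=\mathbf{D}(\tau)\mathbf{W}_s\mathbf{D}(\tau)^{-1}$ established in the proof of Proposition~\ref{pro:switch-eigen-invar}.

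I do not expect any step to be a genuine obstacle. The only point needing care is the bookkeeping that the magnitudes agree edge by edge, so that they factor through the diagonal conjugation, and that non-edges contribute zeros in both matrices.
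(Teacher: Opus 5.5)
Your proof is correct and follows essentially the same route as the paper: the equal magnitudes $r_{ij}=r'_{ij}$ turn the switching relation on phases into $w'_{ij}=\tau(i)w_{ij}\tau(j)^{-1}$, so $\mathbf{W}'=\mathbf{D}(\tau)\mathbf{W}\mathbf{D}(\tau)^{-1}$, and the similarity argument of Proposition~\ref{pro:switch-eigen-invar} then gives both the eigenvalue and the eigenvector claims. Your explicit checks that non-edges contribute zeros to both matrices and that $\mathbf{D}(\tau)\mathbf{x}\neq 0$ are small but welcome additions.
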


We prove Corollary~\ref{cor:switch-eigen-invar-complex} in the Supplemental Material.

\medskip

For PF properties, it is relevant to consider the eigenvector(s) that are associated with the dominant eigenvalues. 
Therefore, in addition to specifying a generalized-switching equivalence class, we also need to consider which generalized-switching functions yield the desired eigenvector properties. 
In Propositions \ref{pro:swich-lam1-no} and \ref{pro:switch-lam1-yes}, we demonstrate how generalized-switching equivalence classes help identify PF properties.

\begin{proposition}\label{pro:swich-lam1-no}
    If a weight matrix $\mathbf{W}$ does not have a dominant eigenvalue $\lambda_1 > 0$, then none of the weight matrices in
    the same generalized-switching equivalence class satisfy the {PF property, the strong PF property, the complex PF property, or the strong complex PF property}.
\end{proposition}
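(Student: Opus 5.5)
The argument reduces to the spectral invariance in Proposition~\ref{pro:switch-eigen-invar} (or Corollary~\ref{cor:switch-eigen-invar-complex} when the magnitudes are kept fixed). I will prove the contrapositive: if some $\mathbf{W}'$ in the generalized-switching equivalence class of $\mathbf{W}$ satisfies any of the four properties in Table~\ref{tab:generalized_PF}, then $\mathbf{W}$ has a dominant eigenvalue $\lambda_1>0$.

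The first step checks that each of the four properties forces the existence of a positive dominant eigenvalue. For the PF and strong PF properties this is part of the definition, since both require an eigenvalue $\lambda_1=\rho(\mathbf{W}')>0$. The complex PF and strong complex PF properties likewise require a positive dominant eigenvalue $\lambda_1$, meaning $\lambda_1>0$ and $\lambda_1=\rho(\mathbf{W}')$. The strong versions impose further requirements (simplicity, strict dominance, positivity of the eigenvector), but those are not needed here. In all four cases, therefore, $\rho(\mathbf{W}')>0$ and $\rho(\mathbf{W}')\in\sigma(\mathbf{W}')$.

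The second step transfers this to $\mathbf{W}$. Because $\mathbf{W}'$ lies in the same generalized-switching equivalence class, \eqref{eq:w_tau_transform} gives $\mathbf{W}'=\mathbf{D}(\tau)\mathbf{W}\mathbf{D}(\tau)^{-1}$ for some generalized-switching function $\tau$, so the two matrices are similar. Similar matrices have the same characteristic polynomial, so $\sigma(\mathbf{W})=\sigma(\mathbf{W}')$ with the same algebraic multiplicities. In particular $\rho(\mathbf{W})=\rho(\mathbf{W}')$, and $\lambda_1=\rho(\mathbf{W}')$ is an eigenvalue of $\mathbf{W}$ equal to its spectral radius. Hence $\mathbf{W}$ has a dominant eigenvalue $\lambda_1>0$, contradicting the hypothesis. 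Note that the hypothesis concerns only $\mathbf{W}$'s spectrum, so the eigenvector conditions never come into play.

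The main obstacle is interpretive rather than technical: I must fix what ``the same generalized-switching equivalence class'' means for weight matrices, not merely for signatures. I will take it to mean same edge set, same edge-weight magnitudes, and generalized-switching-equivalent induced signatures, which is exactly the setting of Corollary~\ref{cor:switch-eigen-invar-complex}, so the similarity relation above applies directly. If instead one allows $\Gamma=(\mathbb{C}\setminus\{0\},\times)$ with the full weights as the signature, Proposition~\ref{pro:switch-eigen-invar} applies verbatim. In either reading the equality of spectra is all that is needed.
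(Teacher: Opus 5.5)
Your argument is correct and is essentially the paper's: generalized-switching equivalence gives the diagonal similarity $\mathbf{W}'=\mathbf{D}(\tau)\mathbf{W}\mathbf{D}(\tau)^{-1}$ (Proposition~\ref{pro:switch-eigen-invar} or Corollary~\ref{cor:switch-eigen-invar-complex}), so all matrices in the class have the same spectrum, and each of the four properties in Table~\ref{tab:generalized_PF} requires a positive eigenvalue equal to the spectral radius. Phrasing it as a contrapositive is only a presentational difference. Your reading of ``dominant'' as ``an eigenvalue attaining $\rho$'' (not necessarily strictly dominant) matches the paper's definitions and is the reading needed for the statement to cover the non-strong properties.
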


\begin{proposition}
    If a weight matrix $\mathbf{W}$ has a dominant eigenvalue $\lambda_1 > 0$, then there exists a generalized-switching function $\tau$ such that ${\mathbf{W}'} = \mathbf{D}(\tau)\mathbf{W}\mathbf{D}(\tau)^{-1}$ satisfies the PF property and the complex PF property. Furthermore, if the dominant eigenvalue is simple and its associated eigenvector does not have any $0$ entries, then there exists a generalized-switching function $\tau$ such that ${\mathbf{W}'} = \mathbf{D}(\tau)\mathbf{W}\mathbf{D}(\tau)^{-1}$ satisfies the strong PF property and the strong complex PF property.
    \label{pro:switch-lam1-yes}
\end{proposition}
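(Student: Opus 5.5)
The construction uses the diagonal similarity from Proposition~\ref{pro:switch-eigen-invar} (and Corollary~\ref{cor:switch-eigen-invar-complex}). Conjugating by $\mathbf{D}(\tau)$ with unimodular diagonal entries leaves the spectrum and the edge magnitudes unchanged, and it multiplies the $i$th entry of every eigenvector by $\tau(i)$. The plan is to choose $\tau$ so that it rotates each entry of a dominant eigenvector onto the non-negative real axis.

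First, I interpret ``dominant eigenvalue $\lambda_1>0$'' as $\lambda_1=\rho(\mathbf{W})>0$. Let $\mathbf{x}\neq 0$ be an eigenvector of $\mathbf{W}$ for $\lambda_1$, and write $x_i=|x_i|e^{\ii\theta_i}$. I define $\tau(i)=e^{-\ii\theta_i}$ when $x_i\neq 0$ and $\tau(i)=1$ when $x_i=0$; in either case $\tau(i)\in\Gamma=\{z\in\mathbb{C}\colon |z|=1\}$, so $\tau$ is an admissible generalized-switching function. Set $\mathbf{W}'=\mathbf{D}(\tau)\mathbf{W}\mathbf{D}(\tau)^{-1}$. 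By Proposition~\ref{pro:switch-eigen-invar}, $\mathbf{W}'$ has the same eigenvalues as $\mathbf{W}$, so $\rho(\mathbf{W}')=\lambda_1>0$ is again an eigenvalue. Also $\mathbf{x}'=\mathbf{D}(\tau)\mathbf{x}=(|x_1|,\ldots,|x_N|)^\top$ is a nonzero eigenvector for $\lambda_1$. Its entries are real and non-negative, so $\mathbf{W}'$ has the PF property. Because $\Re(x'_i)=|x_i|\ge 0$, the same vector also shows that $\mathbf{W}'$ has the complex PF property.

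Second, suppose in addition that $\lambda_1$ is simple and that the eigenvector has no zero entries. I read ``dominant'' here as $\lambda_1>|\lambda_j|$ for all other eigenvalues $\lambda_j$, which the strong properties require. Since $\mathbf{W}'$ is similar to $\mathbf{W}$, it has the same eigenvalues with the same algebraic multiplicities. So $\lambda_1$ remains simple and strictly dominant for $\mathbf{W}'$. The same $\tau$ now satisfies $\tau(i)=e^{-\ii\theta_i}$ for every $i$, so $\mathbf{x}'=(|x_i|)$ has all entries strictly positive. This gives the strong PF property, and since $\Re(x'_i)=|x_i|>0$, it also gives the strong complex PF property.

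The only delicate point is the meaning of ``dominant''. The definitions in Table~\ref{tab:generalized_PF} require $\lambda_1=\rho(\mathbf{W})$, and the strong versions require strict dominance over all other eigenvalues. I would state explicitly that this strict inequality is part of the hypothesis in the second claim, because simplicity alone does not exclude another eigenvalue of equal modulus, such as $-\lambda_1$. Everything else in the argument is a direct application of the similarity in Eq.~\eqref{eq:w_tau_transform}.
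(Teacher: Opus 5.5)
Your argument is correct and takes essentially the same route as the paper: choose $\tau(i)=e^{-\ii\theta_i}$ so that $\mathbf{D}(\tau)\mathbf{x}=|\mathbf{x}|$ is a non-negative eigenvector of the similar matrix $\mathbf{W}'$ for the same eigenvalue $\lambda_1$, and a strictly positive one in the second case, which gives the PF and complex PF properties and their strong versions. Your caveat also holds: for the strong claim, strict dominance $\lambda_1>|\lambda_j|$ has to be an explicit hypothesis; for example, $\mathbf{W}=\bigl(\begin{smallmatrix}0&1\\1&0\end{smallmatrix}\bigr)$ has the simple eigenvalue $1=\rho(\mathbf{W})$ with the zero-free eigenvector $(1,1)^\top$, but it also has the eigenvalue $-1$, so no switching can produce the strong PF property.
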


We prove Propositions~\ref{pro:swich-lam1-no} and \ref{pro:switch-lam1-yes} in the Supplemental Material.

\medskip

In the traditional PF theorem, the eigenvector that is associated with the dominant eigenvalue has all positive entries for a non-negative matrix. That eigenvector is unique up to multiplication by a real scalar.
This property does not carry over to weight matrices with complex entries.
However, if the weight matrix $\mathbf{W}$ of a complex-weighted network is normal, then its eigenvectors are orthogonal to each other. Consequently, the dominant eigenvector of $\mathbf{W}$ is unique up to multiplication by a scalar if $\mathbf{W}$ is normal, satisfies one of the generalized PF properties in Table~\ref{tab:generalized_PF}, and has a generalized PF eigenvector with all positive entries. 

To obtain a unique complex PF eigenvector or a unique strong complex PF eigenvector from the generalized-switching operation, the entries of the eigenvectors of ${\bf W}$ must differ in their phases. Suppose instead that there is some other eigenvector $\mathbf{y} \neq \mathbf{x}$ that has the same entrywise phases as  $\mathbf{x}$, with zeros occurring in the same entries. 
For any generalized-switching function $\tau$ with $\Re(\tau(i)x_i) \ge 0$ for all $i\in \{1,\ldots,N\}$, it is then necessarily true that
$\Re(\tau(i)y_i)\ge 0$ for all $i \in \{1,\ldots,N\}$. 
Therefore, there is no generalized-switching function $\tau$ for which $\mathbf{D}(\tau)\mathbf{x}$ is the unique (up to multiplication by a scalar) dominant eigenvector whose entries have non-negative real parts.
Additionally, there is no unique (up to multiplication by a scalar) strong complex PF eigenvector that satisfies $\Re(\tau(i)x_i) > 0$ for all $i \in \{1,\ldots,N\}$.

By Proposition~\ref{pro:switch-lam1-yes}, {if $\mathbf{W}$} has a dominant eigenvalue $\lambda_1 > 0$, there is necessarily a set of complex-weighted networks that satisfy the PF property and the complex PF property. For example, $\mathbf{W}$ satisfies the complex PF property if all of its edge weights are positive. By appropriately choosing a generalized-switching function $\tau$, one can then obtain a set of complex-weighted networks that satisfy the complex PF property. The corresponding 
generalized-switching equivalence class is {closely related to} the traditional structural-balance class~\cite{zaslavsky1982signed,Easley_2010_networks,newman2018networks} (see Section~\ref{sec:structural_balance_class}). Furthermore, if the dominant eigenvalue is simple and its associated eigenvector does not have any $0$ entries, then Proposition~\ref{pro:switch-lam1-yes} also guarantees that there is a generalized-switching function $\tau$ such that a network with the complex-valued weight matrix 
${\mathbf{W}'}$ satisfies both the strong PF property and the strong complex PF property. If all of the edge weights are negative and the associated network is not bipartite, then the dominant eigenvalue of $\mathbf{W}$ is negative. By Proposition~\ref{pro:swich-lam1-no}, it follows that no weight matrix in the same generalized-switching equivalence class as $\mathbf{W}$ satisfies the PF property, the strong PF property, the complex PF property, or the strong complex PF property.
\subsection{Example: Generalized Structural-Balance Class} 
\label{sec:structural_balance_class}
In this subsection, we discuss a specific generalized-switching equivalence class that is closely related to the traditional structural-balance class.\footnote{The traditional \emph{structural-balance class}, which more specifically is a ``strong-structural-balance class'', has been studied extensively for signed networks, where edge weights are either positive or negative. Signed networks arise in many applications in international relations, sociology, economics, biology, and other fields~\cite{diazdiaz2025}. A key notion in the study of signed networks is strong ``structural balance''~\cite{Easley_2010_networks,Facchetti_2011_large,Kunegis_2009_Zoo,SzellEtal_2010_multirelational}, which requires that all cycles in a network have an even number of negative edges. 
Building on ideas of structural balance, researchers have studied the spectral properties of signed networks~\cite{Atay2020signedCheeger,Kunegis2010signspect,zaslavsky1982signed} and have examined their implications for system dynamics~\cite{altafini2013consensus,tian2024sign}. Ideas of structural balance have also been relaxed for studies of community structure in signed networks \cite{traag2009,macon2012}, which in turn influenced the development of some methods to detect community structure in multilayer networks \cite{mucha2010}.} The structural-balance class has been examined previously for Hermitian weight matrices (see, \eg, \cite{lange2015magL,tian2023complex}). We relax this assumption and propose a notion of generalized structural balance using the induced generalized signature in Definition \ref{def:complex-signature}.

\begin{definition}[generalized structural balance]
    Let $G$ be a directed and complex-weighted network with induced generalized signature $s$. The network $G$ satisfies \emph{generalized structural balance} if $s$ is generalized-switching equivalent to the trivial generalized signature $s_1$ (which satisfies $s_1(e) = \mathrm{id} = 1$ for all $e\in E$).
    \label{def:gen-balance}
\end{definition}

Consider a cycle
\begin{equation}
    (j_1,j_2), (j_2,j_3), \ldots, (j_{l-1},j_l), (j_l,j_1)
\end{equation}
in a directed and weighted network $G$ {with induced generalized signature $s$ on the edge set}. The value of the induced generalized signature of this cycle is the conjugacy class in $\Gamma$ of the group element
\begin{equation}
    s_{j_1j_2}\, s_{j_2j_3}\cdots s_{j_{l-1}j_l}\, s_{j_lj_1}\,,
    \label{eq:cyc-conjug}
\end{equation}
where we use the same notation $s_{ij} = s(e) = s((i,j))$ as in Definition \ref{def:signature}.

In Proposition \ref{pro:bal-cycle-1}, we show that the value of the induced generalized signature of every cycle is $1$ in a generalized-structurally-balanced network.

\begin{proposition}
    In a directed and complex-weighted network $G$ satisfies generalized structural balance, the value of the induced generalized signature of every cycle in $G$ is $1$.
    \label{pro:bal-cycle-1}
\end{proposition}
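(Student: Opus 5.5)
By Definition~\ref{def:gen-balance}, generalized structural balance means there is a generalized-switching function $\tau\colon V\to\Gamma$, with $\Gamma=(\{x\in\mathbb{C}\colon |x|=1\},\times)$, such that $s_1=s^\tau$. By Definition~\ref{def:switching}, this gives
\[
1=\tau(i)\,s_{ij}\,\tau(j)^{-1}\quad\text{for every edge } e=(i,j)\in E .
\]
The plan is to solve this for the signature, $s_{ij}=\tau(i)^{-1}\tau(j)$, and then substitute it edge by edge into the cycle product~\eqref{eq:cyc-conjug}.

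Take a cycle $(j_1,j_2),\ldots,(j_{l-1},j_l),(j_l,j_1)$ in $G$. The product~\eqref{eq:cyc-conjug} becomes
\[
\bigl(\tau(j_1)^{-1}\tau(j_2)\bigr)\bigl(\tau(j_2)^{-1}\tau(j_3)\bigr)\cdots\bigl(\tau(j_l)^{-1}\tau(j_1)\bigr).
\]
This product telescopes, since each interior factor $\tau(j_k)\tau(j_k)^{-1}$ cancels. What remains is $\tau(j_1)^{-1}\tau(j_1)=1$.

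Telescoping in this order uses only associativity, so it holds in any group. Here $\Gamma$ is moreover abelian, so each conjugacy class is a single element. The conjugacy class of $1$ is $\{1\}$, so the value of the induced generalized signature of the cycle is $1$. Choosing a different starting vertex $j_1$ gives a cyclic rotation of the product. That is a conjugate element, so the value is well defined, though commutativity makes this point automatic.

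There is no substantive obstacle. The only care needed is getting the direction of the switching relation right: $s_1=s^\tau$ versus $s=s_1^{\tau}$. Using the other convention gives $s_{ij}=\tau(i)\tau(j)^{-1}$ instead, and that version telescopes in exactly the same way, so the conclusion is unaffected.
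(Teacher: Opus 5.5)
Your proof is correct and matches the paper's argument: the paper takes the balance condition as $s = s_1^{\tau}$, so $s_{ij} = \tau(i)\tau(j)^{-1}$, and telescopes the cycle product to $1$ exactly as you do. Your remark that your convention $s_1 = s^{\tau}$, which gives $s_{ij} = \tau(i)^{-1}\tau(j)$, telescopes in the same way is right, because generalized switching is symmetric under $\tau \mapsto \tau^{-1}$.
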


\begin{proof}
    Because $G$ satisfies generalized structural balance, the induced generalized signature $s$ is generalized-switching equivalent to the trivial generalized signature $s_1$. Therefore, there is a generalized-switching function $\tau\colon V\to \Gamma$ such that 
     \begin{align*}
         s(e) = \tau(i)s_1(e)\tau(j)^{-1} = \tau(i)\tau(j)^{-1} \,\,\, \text{for all}\,\,\, e = (i,j)\in E\,.
    \end{align*}
    Consequently, the induced generalized signature value of each cycle is
    \begin{align*}
          &s_{j_1j_2}\, s_{j_2j_3}\cdots s_{j_{l - 1}j_l}\, s_{j_lj_1} \\
        &\qquad = \tau(j_1)\tau(j_2)^{-1}\tau(j_2)\tau(j_3)^{-1}\cdots \tau(j_{l-1})\tau(j_l)^{-1}\tau(j_l)\tau(j_1)^{-1} = 1\, .
    \end{align*}
\end{proof}

For networks with complex weights, we define the ``phase'' of a cycle to be the sum (modulo $2\pi$) of the phases of its edges. 
In a complex-weighted network that satisfies generalized structural balance, all cycles have phase $0$.

The weight matrix of a complex-weighted network that satisfies generalized structural balance is similar to a non-negative matrix.
 
\begin{proposition}
    Let $G$ be a directed and complex-weighted network. If $G$ satisifes generalized structural balance, its weight matrix $\mathbf{W}$ is similar to the non-negative matrix $|\mathbf{W}|$ (where we recall that we take absolute values entrywise).
    \label{pro:gen-bal-sim}
\end{proposition}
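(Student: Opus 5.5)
The plan is to construct the similarity transformation explicitly, using the generalized-switching function supplied by the definition of generalized structural balance (Definition~\ref{def:gen-balance}). Since $G$ satisfies generalized structural balance, its induced generalized signature $s$ is generalized-switching equivalent to the trivial signature $s_1$. Equivalence is symmetric, so there is a function $\tau\colon V\to\Gamma$, with $\Gamma$ the unit circle, such that
\begin{equation*}
s(e) = \tau(i)\,\tau(j)^{-1} \quad \text{for all } e=(i,j)\in E\,,
\end{equation*}
exactly as in the proof of Proposition~\ref{pro:bal-cycle-1}. If one prefers to invoke the definition in the direction $s_1 = s^{\tau'}$, one takes $\tau = (\tau')^{-1}$ pointwise, which is legitimate because $\Gamma$ is an abelian group.

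Next, I write each nonzero entry of $\mathbf{W}$ in polar form. For an edge $(i,j)\in E$ we have
\begin{equation*}
w_{ij} = r_{ij}e^{\ii\varphi_{ij}} = |w_{ij}|\, s_{ij} = \tau(i)\,|w_{ij}|\,\tau(j)^{-1}\,.
\end{equation*}
For $(i,j)\notin E$, both $w_{ij}$ and $|w_{ij}|$ vanish. With $\mathbf{D}(\tau)$ the diagonal matrix from the proof of Proposition~\ref{pro:switch-eigen-invar}, these two facts give the entrywise identity
\begin{equation*}
\mathbf{W} = \mathbf{D}(\tau)\,|\mathbf{W}|\,\mathbf{D}(\tau)^{-1}\,.
\end{equation*}
The matrix $\mathbf{D}(\tau)$ is invertible because $|\tau(i)|=1\neq 0$; in fact it is unitary. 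Hence $\mathbf{W}$ is similar (even unitarily similar) to the non-negative matrix $|\mathbf{W}|$.

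Alternatively, one can phrase the argument through Corollary~\ref{cor:switch-eigen-invar-complex}. Apply it to $G' $ with weights $|w_{ij}|$: this network has the same magnitudes, and its induced signature is $s_1$, which is switching equivalent to $s$. The corollary's proof then gives the same similarity relation.

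No step here is genuinely hard. The only point requiring care is the bookkeeping of the direction of $\tau$ versus $\tau^{-1}$, together with checking that the zero pattern of $\mathbf{W}$ is preserved, since diagonal conjugation never creates or destroys zero entries.
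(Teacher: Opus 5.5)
Your proposal is correct and follows the paper's proof essentially verbatim: you extract $\tau$ with $s(e)=\tau(i)\tau(j)^{-1}$ from generalized structural balance, write $w_{ij}=\tau(i)|w_{ij}|\tau(j)^{-1}$ entrywise, and conclude that $\mathbf{W}=\mathbf{D}(\tau)|\mathbf{W}|\mathbf{D}(\tau)^{-1}$. Your extra remarks are correct details that the paper leaves implicit: the direction of $\tau$ (reversing it needs no commutativity, since $\tau=(\tau')^{-1}$ pointwise works in any group), the preserved zero pattern, and the unitarity of $\mathbf{D}(\tau)$.
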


\begin{proof}
     Because $G$ satisfies generalized structural balance, the induced generalized signature $s$ is generalized-switching equivalent to the trivial generalized signature $s_1$. Therefore, there is a generalized-switching function 
     $\tau\colon V\to \Gamma$ such that 
     \begin{align*}
         s(e) = \tau(i)s_1(e)\tau(j)^{-1} = \tau(i)\tau(j)^{-1} \,\,\, \text{for all}\,\,\,  e = (i,j)\in E\,.
    \end{align*}
    Consequently, {as in} the proof of Corollary \ref{cor:switch-eigen-invar-complex}, we have 
    \begin{align*}
        w_{ij} = r_{ij}s(e) = r_{ij}\tau(i)\tau(j)^{-1} = \tau(i)|w_{ij}|\tau(j)^{-1}\, ,
    \end{align*}
    where $w_{ij} = r_{ij}e^{\ii \varphi_{ij}}$ is the complex weight of the edge $e = (i,j)\in E$. Therefore, 
     \begin{align*}
         \mathbf{W} = \mathbf{D}(\tau)|\mathbf{W}|\mathbf{D}(\tau)^{-1}\,,
     \end{align*}
     where $\mathbf{D}(\tau)$ is the diagonal matrix with entries $(\mathbf{D}(\tau))_{ii} = \tau(i)$.
\end{proof} 

Let $\mathbf{W}$ be the weight matrix of a generalized-structurally-balanced complex-weighted network $G$. 
By Proposition \ref{pro:gen-bal-sim}, $\mathbf{W}$ and $|\mathbf{W}|$ are similar and thus have identical spectra. If $\mathbf{x}$ is the
PF eigenvector of $|\mathbf{W}|$, it follows that $\mathbf{D}(\tau)\mathbf{x}$ is {a} dominant eigenvector of $\mathbf{W}$.

We now characterize which generalized-switching functions transform the generalized signature of $|\mathbf{W}|$ (which necessarily has a trivial generalized signature) to obtain a weight matrix with an induced generalized signature that preserves the PF, strong PF, complex PF, and strong complex PF properties through how they change the PF eigenvector. 
\begin{itemize}

\item[(1)]\label{item:PF_strong_PF}\emph{PF and strong PF properties.} The PF property requires a dominant eigenvector to have non-negative real entries, whereas the strong PF property requires strict positivity. 
When $|\mathbf{W}|$ is irreducible, its PF eigenvector $\mathbf{x}$ is strictly positive. 
In this case, the vector $\mathbf{D}(\tau)\mathbf{x}$ is real if and only if $\tau(i) = 1$ for all nodes $i$. 
Therefore, only the trivial generalized-switching function preserves the PF property, and this is also true for the strong PF property.

\medskip

\item[(2)]{\emph{Complex PF and strong complex PF properties.}
The complex PF property requires the real parts of a dominant eigenvector to be non-negative, and the strong complex PF property requires strict positivity of the real parts of a dominant eigenvector. The vector $\mathbf{D}(\tau)\mathbf{x}$ is a dominant eigenvector, and $e^{\ii \phi}\mathbf{D}(\tau)\mathbf{x}$ is a dominant eigenvector for any phase $\phi$. Therefore, to satisfy the complex PF property, there necessarily exists a phase $\phi$ such that
\begin{equation}
    \Re\bigl({e^{\ii \phi}} \tau(i)\bigr) \ge 0 \,\,\, \text{for all}\,\,\,  i\, . 
\label{equ:CPF:balance}
\end{equation}

Therefore, any generalized-switching function $\tau$ that satisfies~\eqref{equ:CPF:balance} yields a matrix ${\mathbf{W}}$ whose dominant eigenvector satisfies the complex PF property. For the strong complex PF property, we replace the inequality in~\eqref{equ:CPF:balance} with a strict inequality.
}
\end{itemize}

For a Hermitian weight matrix $\mathbf{W}$, generalized structural balance reduces to traditional structural balance. It was proven in \cite{tian2023complex} that one can define the structural-balance class in complex-weighted networks using a partition $\{V_1, V_2, \ldots, V_k\}$ of the set of nodes. 
We refer to each unit $V_h\subseteq V$ as a ``block'' of a partition. In a partition, recall that $V_h\cap V_l = \emptyset$ if $h\ne l$ and $\cup_{h = 1}^kV_h = V$.
A complex-weighted network is structurally balanced if (1) every edge whose attached nodes are in the same block $V_h$ as each other has phase $0$, (2) all edges from nodes in block $V_h$ to nodes in block $V_l \neq V_h$ have the same phase $\theta_{hl}$, and (3) when one considers each block as a single ``supernode'', the sum of the phases along the edges of any cycle of supernodes is a multiple of $2\pi$. {We refer to a partition that satisfies properties (1)--(3) as a ``characteristic partition''.}

Let $\{V_1, V_2, \ldots, V_k\}$ be a characteristic partition of the set of nodes of a structurally balanced complex-weighted network with weight matrix $\mathbf{W}$.
All paths that connect nodes in $V_{\sigma(i)}$ to nodes in $V_{\sigma(j)}$, where $\sigma\colon V\to \{1,\ldots, k\}$ assigns each node to a block of the partition, {must} have the same phase.
Therefore, we consider a ``characteristic generalized-switching function'' $\tau$ with entries
\begin{equation}
    \tau(i) = \exp(\ii \theta(P_{1i})) = \exp(\ii \theta(P_{\sigma(1)\sigma(i)}))\,,
\end{equation}
where $P_{1i}$ is a path from node $1$ to node $i$, the path $P_{\sigma(1)\sigma(i)}$ starts from a node in block $V_{\sigma(1)}$ and terminates at a node in block $V_{\sigma(i)}$, and $\theta(\cdot)$ returns the sum (modulo $2\pi$) of the phases that are associated with the edges of the path.

When one applies the characteristic generalized-switching function $\tau$ to a network with the trivial generalized signature $s_1 = \mathrm{id}$, one obtains $\mathbf{D}(\tau) |\mathbf{W}| \mathbf{D}(\tau)^{-1} = \mathbf{W}$. According to the condition~\eqref{equ:CPF:balance}, {the generalized-switching-function entries must satisfy $\Re (e^{\ii \phi} \tau(i)) \ge 0$ for all $i$ and some phase $\phi$.
Given the parameterization $\tau(i) = \exp(\ii \theta(P_{\sigma(1)\sigma(i)}))$, there is a phase $\phi$ such that
\begin{equation}\label{eq:balance-cond-path}
     \Re (e^{\ii \phi} e^{\ii \theta(P_{\sigma(1)\sigma(i)})}) = \cos\bigl(\phi+\theta(P_{\sigma(1)\sigma(i)}) \bigr) \ge 0 \,\,\, \text{for all}\,\,\, i\in V\,.
\end{equation}
That is, the phases of all paths that start at node 1 must lie (modulo $2\pi$) in a closed interval of length $\pi$.
\begin{figure}
    \centering
    \begin{tikzpicture}[>=stealth, thick]

\node[anchor=north west] at (-2.5,2.5) {(a)};

\node[circle, draw, fill=gray!20, minimum size=15pt] (1) at (90:2)  {1};
\node[circle, draw, fill=gray!20, minimum size=15pt] (2) at (210:2) {2};
\node[circle, draw, fill=gray!20, minimum size=15pt] (3) at (330:2){3};

\draw[->, bend left=20, draw=taborange!100]
  (1) to node[pos=0.65, right, taborange!100] {\( \theta_1+\theta_2 \)} (2);

\draw[->, bend left=20, draw=tabgreen!100, dashed]
  (2) to node[midway, below] {} (3);

\draw[->, bend left=20, draw=tabblue!100, dashed]
  (3) to node[midway, right] {} (1);

\draw[->, bend left=20, draw=taborange!100, dashed]
  (2) to node[midway, left] {} (1);

\draw[->, bend left=20, draw=tabgreen!100]
  (3) to node[midway, below, tabgreen!100] {\( \theta_2 \)} (2);

\draw[->, bend left=20, draw=tabblue!100]
  (1) to node[midway, right, tabblue!100] {\( \theta_1 \)} (3);

\end{tikzpicture}
    \hspace{5em}
    \begin{tikzpicture}[>=stealth, thick]

\node[anchor=north west] at (-2.5,2.5) {(b)};
    
\node[circle, draw, fill=gray!20, minimum size=25pt] (1) at (90:2)  {};

\filldraw[gray] (-0.25,2) circle (0.1);
\filldraw[gray] (0.25,2) circle (0.1);

\draw[thin, <->] (-0.15,2) -- (0.15,2);

\node[circle, draw, fill=gray!20, minimum size=25pt] (2) at (210:2) {};

\filldraw[gray] (-2*0.866025-0.25,-1) circle (0.1);
\filldraw[gray] (-2*0.866025+0.25,-1) circle (0.1);

\draw[thin, <->] (-2*0.866025-0.15,-1) -- (-2*0.866025+0.15,-1);

\node[circle, draw, fill=gray!20, minimum size=25pt] (3) at (330:2){};

\filldraw[gray] (2*0.866025-0.25,-1) circle (0.1);
\filldraw[gray] (2*0.866025+0.25,-1) circle (0.1);

\draw[thin, <->] (2*0.866025-0.15,-1) -- (2*0.866025+0.15,-1);

\draw[->, bend left=20, draw=taborange!100]
  (1) to node[pos=0.65, right, taborange!100] {\( \theta_1+\theta_2 \)} (2);

\draw[->, bend left=20, draw=tabgreen!100, dashed]
  (2) to node[midway, below] {} (3);

\draw[->, bend left=20, draw=tabblue!100, dashed]
  (3) to node[midway, right] {} (1);

\draw[->, bend left=20, draw=taborange!100, dashed]
  (2) to node[midway, left] {} (1);

\draw[->, bend left=20, draw=tabgreen!100]
  (3) to node[midway, below, tabgreen!100] {\( \theta_2 \)} (2);

\draw[->, bend left=20, draw=tabblue!100]
  (1) to node[midway, right, tabblue!100] {\( \theta_1 \)} (3);
  
\end{tikzpicture}
    \caption{Examples of structurally-balanced complex-weighted networks with (a) $3$ nodes and (b) $3$ blocks, where edges of the same color but different textures have opposite phases.
    In both examples, the condition \eqref{equ:CPF:balance} for the associated {Hermitian} weight matrix ${\bf W}$ to satisfy the complex PF property holds when \eqref{eq:balance-cond-path} is satisfied.
    }        
    \label{fig:bal-3}
\end{figure}

As examples, we show a structurally-balanced complex-weighted network with $3$ nodes in Figure~\ref{fig:bal-3}(a) and a structurally-balanced complex-weighted network with $3$ blocks in Figure~\ref{fig:bal-3}(b). In both examples, the weight matrices are Hermitian. The complex PF property requires that all paths that start at node $1$ have sufficiently close phases. Specifically, their phases must be contained in an interval of length $\pi$, as specified in \eqref{eq:balance-cond-path}. We obtain sufficient (but not necessary) conditions by specifying a value of $\phi$ in \eqref{eq:balance-cond-path}. For instance, with $\phi = 0$, the two example networks in Figure~\ref{fig:bal-3} satisfy the complex PF property if the phases satisfy the following conditions.
For $\theta_1,\theta_2\in[0,2\pi)$, the complex PF property holds when (1) $\theta_1 \le \pi/2$ and $\theta_2 \ge 3\pi/2$ (\ie, $\theta_1$ is small and $\theta_2$ is large) or when (2) $\theta_1 \ge 3\pi/2$ and $\theta_2 \le \pi/2$ (\ie, $\theta_2$ is small and $\theta_1$ is large).
When $\theta_1$ and $\theta_2$ are both small, we require that (3a) $\theta_1 + \theta_2 \le \pi/2$; when they are both large, we require that (3b) $\theta_1 + \theta_2 \ge 7\pi/2$. We obtain conditions for the strong complex PF property by replacing the non-strict inequalities with strict inequalities.

The results in this subsection also provide some insight into Figure~\ref{fig:pf_example}.
When $\varphi = 0$, the edge weights are $1$ and the weight matrix ${\bf W}$ satisfies the strong PF property, which implies that it satisfies the complex PF property for all values of ${q}$. 
The parameter $q$ determines the scalings of the phases $\varphi_2 = q\varphi$ and $\varphi_3 = q^2\varphi$ in
Eq.~\eqref{eq:pf_example}.
In Figure~\ref{fig:pf_example}, we observe a recurring pattern for the complex PF property as we vary ${q}$. As we increase ${q}$, the phases ${q}\varphi$ and ${q}^2\varphi$ wind more rapidly modulo $2\pi$, leading to increasingly frequent alternations between parameter values for which the complex PF property holds and fails. For ${q} = 0$ and $\varphi > 0$, similarly to condition~\eqref{eq:balance-cond-path}, the complex PF property holds when either $\varphi \lessapprox \pi/2$ or 
$\varphi \gtrapprox 3\pi/2$.

As a final example, consider the 2-node network with Hermitian weight matrix
\begin{equation} \label{eq:2x2_hermitian_weight}
    \mathbf{W} =
		\begin{pmatrix}
			a & b e^{\ii\varphi} \\
			b e^{-\ii\varphi} & c
			\end{pmatrix}\,,
\end{equation}
where $a, b, c \in \mathbb{R}_{\geq 0}$ and $\varphi\in[0,2\pi)$. This network, which was examined by B\"ottcher and Porter~\cite{bottcher2024complex}, is in the structural-balance class.\footnote{Aside from self-edges with phase $0$, there is only one cycle between nodes 1 and 2. The edges of the cycle have opposite phases, so these phases sum to $0$.} Consequently, its weight matrix satisfies the strong PF property when $\varphi=0$ (as discussed in~\cite{bottcher2024complex}), as this is the trivial-switching case. See the discussion on PF and strong PF properties on page~\pageref{item:PF_strong_PF}.
\section{Generalization of Eigenvector Centrality and Related Centralities}
\label{sec:generalized_centralities}
Eigenvector centrality is a popular measure of node importance in  networks~\cite{newman2018networks,central2025}. 
When a network has real and positive edge weights, edges to nodes with larger eigenvector centralities contribute more to a node's eigenvector centrality than edges to nodes with smaller eigenvector centralities. Let $x_i$ denote the eigenvector centrality of node $i$, let $\mathbf{x}$ denote the vector of eigenvector centralities, and let $\mathbf{W}$ denote a network's weight matrix. We then have
\begin{align} \label{eigen}
    \mathbf{W}\mathbf{x} = \lambda \mathbf{x}\,.
\end{align}
where $\lambda$ is a real positive constant.\footnote{Eigenvector centrality has known drawbacks, and researchers have proposed various strategies to mitigate them~\cite{snyder2024primer}. For instance, eigenvector centrality has regimes with localization, which results in biased concentration of the support of the eigenvector that is associated with the dominant eigenvalue of a weight matrix ${\bf W}$ on a small subset of the associated network's nodes~\cite{martin2014local}. One can drastically mitigate localization by using a non-backtracking matrix instead of an adjacency matrix, and one can define non-backtracking matrices for networks with complex weights.} For strongly connected networks, the PF theorem guarantees that $\mathbf{W}$ has a dominant eigenvalue $\lambda_1 = \rho(\mathbf{W})$ with an associated eigenvector $\mathbf{x}$ with {positive} entries.
\subsection{Generalization of Eigenvector Centrality}
\label{sec:generalization_ev_centrality}
For networks with complex weights, we extend the characteristic space of each edge from positive real numbers to the complex plane (excluding the origin). Correspondingly, we also extend the {eigenvector centrality} of each node to the complex plane. That is, the {eigenvector centrality} of node $i$ now takes a scalar value $x_i\in\mathbb{C}\setminus \{0\}$. 
To determine the {eigenvector centrality} along a certain direction in $\mathbb{C}$, one projects the eigenvector $\mathbf{x}$ onto the unit vector in that direction. To understand how the {eigenvector centralities} of nodes depend on their neighbors, one needs to know how the complex weights characterize the flow of {centrality}. Accordingly, we suppose that the edge weight $w_{ij} = r_{ij}e^{\ii\varphi_{ij}}$ encodes the relative {centrality} of node $i$ to node $j$, where we characterize the flow of {centrality} both by the magnitude $r_{ij}$ and the phase $\varphi_{ij}$. As in Eq.~\eqref{eigen}, we then determine the eigenvector centralities of the nodes using the eigenvalue equation
\begin{align}
    \mathbf{W}\mathbf{x} = \lambda \mathbf{x}\,,
    \label{eq:general_ev}
\end{align}
where $\lambda$ is a real positive constant.
When $\mathbf{W}$ only has real non-negative entries, the generalized eigenvector centralities $x_i$ from Eq.~\eqref{eq:general_ev} reduce to the ordinary eigenvector centralities $x_i$ from Eq.~\eqref{eigen}.
{Moreover, when $\mathbf W$ is Hermitian, the dominant eigenvalue and its associated eigenvector yields the best rank-$1$ approximation of $\mathbf W$ in both the Frobenius norm and the matrix $2$-norm (by the Eckart--Young theorem~\cite{eckart1936approximation}). When the dominant eigenvalue is simple, this best rank-$1$ approximation is unique up to multiplication of $\mathbf x$ by a nonzero complex scalar.}

We now examine the directional information that is encoded in our generalized eigenvector centrality. For an eigenvector $\mathbf{x}$ that is associated with the eigenvalue $\lambda$, any vector $e^{i\varphi}\mathbf{x}$ (with $\varphi \in [0,2\pi)$) that we obtain by a global phase rotation is also an eigenvector that is associated with $\lambda$.\footnote{Although it is not formally equivalent, this idea of rotating eigenvectors to obtain interpretable centrality scores bears some resemblance to Wick rotation~\cite{wick1954}, which one uses to transform problems from Minkowski space to Euclidean space to facilitate their solution.} We can thus consider generalized eigenvector centrality along the positive real axis if the phases that are associated with the eigenvector entries are in an interval of length $\pi$. One can then multiply $\mathbf{x}$ by a global phase factor $e^{\ii\varphi}$ to ensure that $\mathrm{Re}\left(e^{\ii\varphi}\mathbf{x}\right) \ge 0$.
In principle, multiple global phase factors can yield $\mathrm{Re}\left(e^{\ii\varphi}\mathbf{x}\right) \ge 0$. However, as in the traditional PF theorem for non-negative matrices, we seek an eigenvector that is associated with the largest eigenvalue $\lambda_1 = \rho(\mathbf{W})$. When $\lambda_1$ is simple, the associated eigenvector is unique up to multiplication
by a nonzero complex scalar. Accordingly, $\mathbf{x}$ is a complex right PF eigenvector (see Table~\ref{tab:generalized_PF}) and its real part gives an eigenvector centrality
\begin{align}     \label{eq:eigen-cent-re}
    c_i = \mathrm{Re}\left(x_i\right)\,.
\end{align}

More generally, one can consider a range of directions $\mathbf{u}$ that yield non-negative real parts and assign a weight $p(\mathbf{u}) > 0$ to each such direction. The eigenvector centrality of node $i$ is then
\begin{align}
    c_i = \int_{\mathbf{u}\in S}p(\mathbf{u})\mathrm{Re}(e^{-\ii\varphi(\mathbf{u})}x_i) \, \mathrm{d}\mathbf{u}\,,
    \label{eq:eigen-cent-set}
\end{align}
where $S$ is the set of directions, $\int_{\mathbf{u}\in S}p(\mathbf{u})\mathrm{d}\mathbf{u} = 1$, and $\varphi(\mathbf{u})$ is the angle of $\mathbf{u}$ with respect to the positive real axis.
In an ideal situation in which $\mathbf{x}$ is also a right PF eigenvector in which each entry is real and non-negative, we obtain the same eigenvector centrality along all directions.
\subsection{Generalization of Other Eigenvector-Based Centralities and Related Centralities}
One can also generalize other eigenvector-based centrality measures to networks with complex weights. As examples, {we consider hubs and authorities~\cite{kleinberg1999authoritative} and PageRank~\cite{gleich2015pagerank}.}
We also consider Katz centrality~\cite{katz1953new}, which depends on an eigenvector but is not itself an eigenvector of a matrix. 

For networks with positive real weights, hub and authority centralities separate contributions from incoming edges (\ie, ``in-edges'') and outgoing edges (\ie, ``out-edges''), so each node has both a hub centrality $\mathbf{c}_{\rm H}$ and an authority centrality $\mathbf{c}_{\rm A}$~\cite{kleinberg1999authoritative}. Intuitively, a node is a better hub if it points to more authorities, and a node is a better authority if more hubs point to it. We express this intuition mathematically using the equations
\begin{align}
        \mathbf{c}_{\rm H} = \gamma_1 \mathbf{W}\mathbf{c}_{\rm A} \,, \,
        \mathbf{c}_{\rm A} = \gamma_2 \mathbf{W}^\top\mathbf{c}_{\rm H}
    \quad \Longleftrightarrow \quad
        \mathbf{c}_{\rm H} = \gamma \mathbf{W}\mathbf{W}^\top\mathbf{c}_{\rm H} \,, \,
        \mathbf{c}_{\rm A} = \gamma\mathbf{W}^\top\mathbf{W}\mathbf{c}_{\rm A} \,,
        \label{eq:gen_hubs_auth}
\end{align}
where $\gamma_1, \gamma_2 > 0$ are real constants and $\gamma = \gamma_1\gamma_2$. 
That is, $\mathbf{c}_{\rm H}$ and $\mathbf{c}_{\rm A}$ are the eigenvectors of $\mathbf{W}\mathbf{W}^\top$ and $\mathbf{W}^\top\mathbf{W}$, respectively, with the same eigenvalue $1/\gamma$.
For a directed network that is strongly connected and has a weight matrix $\mathbf W$ {with} non-negative entries, the {traditional} PF theorem guarantees that both $\mathbf W\mathbf W^\top$ and $\mathbf W^\top\mathbf W$ have a simple eigenvalue that equals their spectral radius and also that their corresponding eigenvectors $\mathbf c_{\rm H}$ and $\mathbf c_{\rm A}$ are strictly positive. When a network's edge weights take complex values, the quantities $\mathbf{c}_{\rm H}, \mathbf{c}_{\rm A} \in \mathbb{C}^N$ give hub and authority values in the complex plane, and one can then consider centralities along a specific direction [as in Eq.~\eqref{eq:eigen-cent-re}] or along a set of directions [as in Eq.~\eqref{eq:eigen-cent-set}]. 

The PageRank vector $\mathbf c_{\rm P}$ is the stationary state of a random walk with teleportation~\cite{gleich2015pagerank}. For a complex-weighted 
network, we extend PageRank centrality by considering random walks with complex-valued transition matrices~\cite{boettcher2021classical,tian2023complex,bottcher2024complex}. We define the PageRank vector $\mathbf{c}_{\rm P}\in\mathbb{C}^M$ as the solution of
\begin{equation}
	\mathbf c_{\rm P} = a \mathbf P^\top \mathbf c_{\rm P} + \frac{1 - a}{N}\mathbf 1 \,,
\label{eq:pagerank}
\end{equation}
where $a\in[0,1/\lambda_1(\mathbf{P}))$ is the teleportation parameter, 
$\mathbf{1}$ is the vector with $1$ in each entry, 
and the transition matrix $\mathbf P$ has entries $P_{ij} = w_{ij}\exp(\ii\varphi_{ij})/d_i$. When $\varphi_{ij} = 0$ for all $(i,j)$, this notion of PageRank reduces to the classical notion of PageRank.
 
We also generalize Katz centrality to networks with complex weight matrices $\mathbf{W}$. The Katz-centrality vector $\mathbf{c}_{\rm K}\in \mathbb{C}^N$ quantifies node importances by aggregating contributions from all possible walks to each node~\cite{newman2018networks}. That is,
\begin{align}
	\mathbf{c}_{\rm K} = \sum_{k = 1}^{\infty} \alpha^k (\mathbf{W}^\top)^k \mathbf{1}\,,
    \label{eq:gen_katz}
\end{align}
where the real parameter $\alpha \in [0,1)$, with $\alpha < 1/\rho(\mathbf{A})$, is a downweighting factor that progressively diminishes the contributions of longer walks.
\section{Examples with Empirical Data}
\label{sec:applications}
In this section, we calculate generalized eigenvector centrality \eqref{eq:eigen-cent-re} 
on complex-weighted networks from quantum physics, circuit {theory}, mathematical chemistry, and social and communication networks (see Figure~\ref{fig:examples}). 
We also compare calculations of generalized eigenvector centrality with calculations of generalized PageRank, generalized Katz centrality, and generalized hubs and authorities.

\begin{figure}
    \centering
    \includegraphics[width=\textwidth]{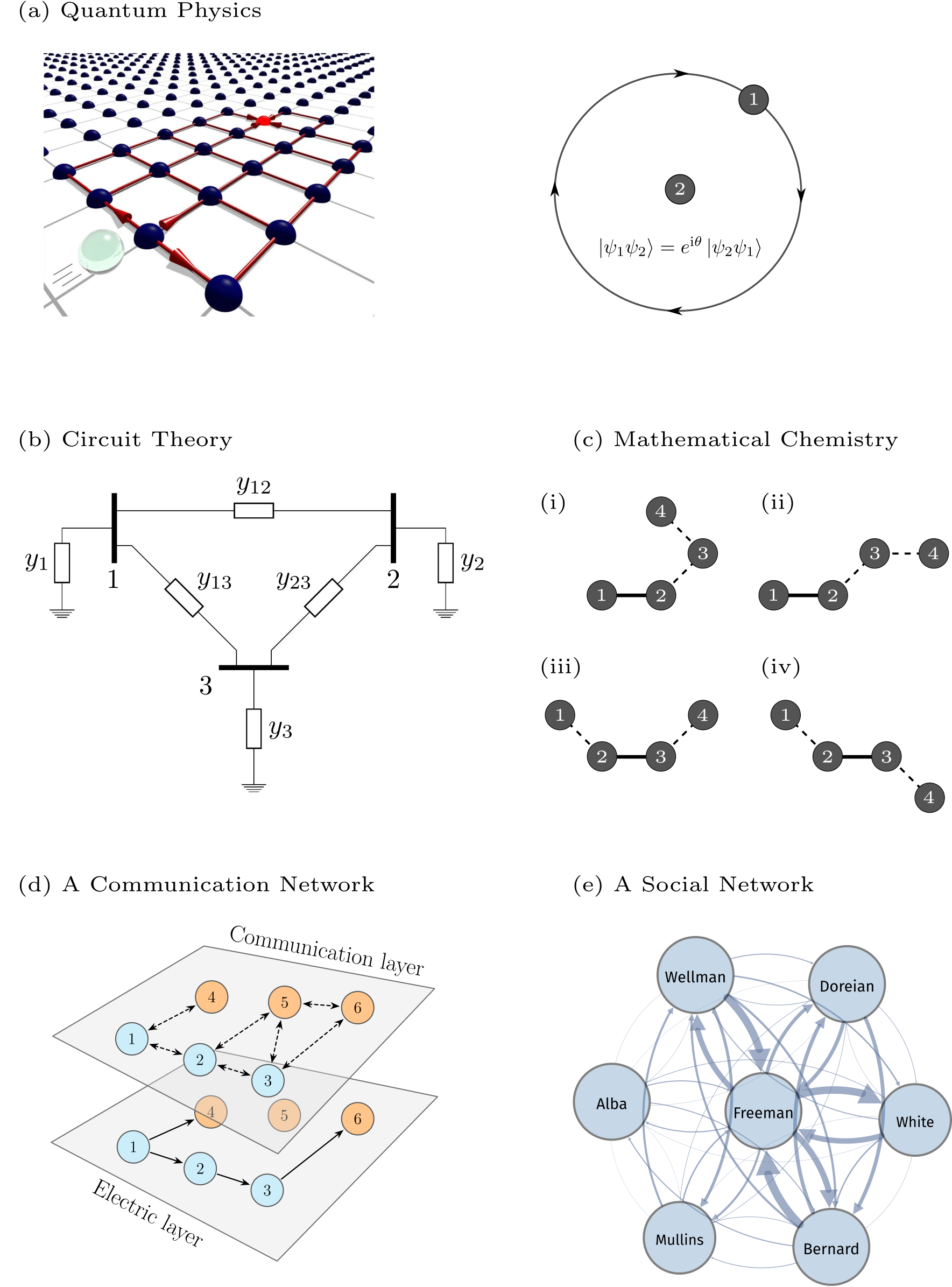}
    \caption{Examples of complex-weighted networks in various applications. (a) In quantum physics, complex weights arise in scattering processes and in the propagators of quantum walks. 
    One can also associate complex phases with permuting anyons.
     (b) An example of a 3-node circuit with 
     complex-valued admittances.
    (c) The isomeric forms of butene and their corresponding conformers~\cite{lekishvili1997characterization}. One can use complex edges weights to {encode} the relative positions of different parts of a molecule. 
    Thick lines between two nodes indicate edges of weight $2$, and thin {dashed} lines indicate edges of weight $1$.
    (d,e) In the mathematical description of communication and social networks, one can use complex edge weights to distinguish between in-edges and out-edges~\cite{sanchez_infrastructure,hoser2005eigenspectral}. In (d), the blue nodes signify electric entities and the orange nodes signify communication entities.
    [We reproduced the left picture in panel (a) from \cite{schreiber2013quantum}. We reproduced panel (b) from \cite{wikimedia_3busnetwork}.]
    }
    \label{fig:examples}
\end{figure}

For each weight matrix $\mathbf{W}$, we compute the eigenvectors of $\mathbf{W} e^{\ii\varphi}$ (with $\varphi \in [0,2\pi)$) that are associated with the largest-magnitude eigenvalue. We can thereby determine the values of $\varphi$ (if there are any) for which this eigenvalue is real and positive and if the corresponding eigenvector has entries with non-negative or strictly positive real parts. That is, we can determine when $\mathbf{W}$ satisfies generalized PF properties.
We present examples of weight matrices that satisfy the strong PF property, the strong complex PF property, and none of the generalized PF properties in Table~\ref{tab:generalized_PF}. 
 Even when $\mathbf{W}$ does not satisfy any of these PF properties, we observe in our examples that there are eigenvectors whose structures are consistent with the intuitive node orderings.
\subsection{Quantum Physics}
\begin{figure}
    \centering
    \includegraphics{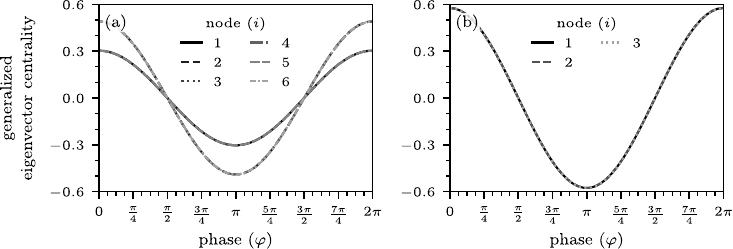}
    \caption{Generalized eigenvector centralities \eqref{eq:eigen-cent-re} in (a) a quantum scattering problem on a network with the weight matrix~\eqref{eq:tight_binding} and (b) a circuit network with the weight matrix~\eqref{eq:circuit}. We multiply the weight matrices \eqref{eq:tight_binding} and \eqref{eq:circuit} by the phase factor $\exp(\ii \varphi)$ and plot eigenvector centrality as a function of the phase $\varphi$. For the weight matrix~\eqref{eq:tight_binding}, we set $E = 1$ and $\varphi = \pi/4$.}
    \label{fig:physics_networks}
\end{figure}
In quantum physics, complex edge weights arise in the descriptions of quantum walks~\cite{bottcher2024complex}, scattering processes~\cite{wu1991quantum,liu1999electronic,Lieb2004,vasilopoulos2007aharonov}, quantum-particle statistics~\cite{nakamura2020direct,abelian_anyons2020,macikazek2019non}, and other phenomena.

As an example, we consider the tight-binding Schr\"odinger equation for single-electron dynamics on a truncated Sierpinski-gasket lattice with six lattice sites~\cite{liu1999electronic}.

The single-electron wave functions $\psi_i$ at the $i$th site satisfy the tight-binding eigenvalue equation
\begin{equation}
    \begin{pmatrix}
        E & e^{-\ii\phi} & 0 & 0 & 0 & e^{\ii\phi} \\
        e^{\ii\phi} & E & e^{-\ii\phi} & e^{\ii\phi} & 0 & e^{-\ii\phi} \\
        0 & e^{\ii\phi} & E & e^{-\ii\phi} & 0 & 0 \\
        0 & e^{-\ii\phi} & e^{\ii\phi} & E & e^{-\ii\phi} & e^{\ii\phi} \\
        0 & 0 & 0 & e^{\ii\phi} & E & e^{-\ii\phi} \\
        e^{-\ii\phi} & e^{\ii\phi} & 0 & e^{-\ii\phi} & e^{\ii\phi} & E
    \end{pmatrix}
    \begin{pmatrix}
        \psi_1 \\ \psi_2 \\ \psi_3 \\ \psi_4 \\ \psi_5 \\ \psi_6
    \end{pmatrix}
= 0\,,
    \label{eq:tight_binding}
\end{equation}
where $E$ is the electron energy and $\phi$ is the magnetic phase.

The weight matrix in Eq.~\eqref{eq:tight_binding} is Hermitian, so all of its eigenvalues are real. For $E = 1$ and $\phi = \pi/4$, its spectral radius is $3.29$ and equals its largest positive eigenvalue $\lambda_1$. 
\begin{figure}
    \centering
    \begin{tikzpicture}[scale=2, every node/.style={draw, circle, fill=black, inner sep=0pt, minimum size=5pt}]

\tikzset{label distance=1mm}

    \node[label=below:1] (A) at (0,0) {};
    \node[label=below:2] (B) at (1,0) {};
    \node[label=below:3] (C) at (2,0) {};
    \node[label=left:6] (D) at (0.5,0.866) {};
    \node[label=right:4] (E) at (1.5,0.866) {};
    \node[label=above:5] (F) at (1,1.732) {};

    \draw (A) -- (B);
    \draw (B) -- (C);
    \draw (C) -- (F);
    \draw (F) -- (A);
    \draw (A) -- (D);
    \draw (D) -- (B);
    \draw (B) -- (E);
    \draw (E) -- (C);
    \draw (F) -- (D);
    \draw (D) -- (E);
    \draw (E) -- (F);

\end{tikzpicture}
    \caption{A 6-node truncated Sierpinski-gasket lattice with complex edge weights.}
    \vspace{-10pt}
    \label{fig:sierpinski}
\end{figure}

The associated eigenvector has strictly positive entries, so the weight matrix in Eq.~\eqref{eq:tight_binding} satisfies the strong PF property. 
It has entries with the value $0.30$ at nodes $1$, $3$, and $5$ and entries with the value $0.49$ at nodes $2$, $4$, and $6$. 
If we instead consider the weight matrix in Eq.~\eqref{eq:tight_binding} with $E = 1$ and $\phi = 0$, the eigenvector entries are the same, but the spectral radius is now $4.24$ instead of $3.29$.

In Figure~\ref{fig:physics_networks}, we plot the generalized eigenvector centrality~\eqref{eq:eigen-cent-re} (\ie, the real parts of the entries of the phase-dependent eigenvector) for $E = 1$ and $\phi = \pi/4$. 
Phases in the range $0\leq \varphi \lessapprox \pi/2$ yield a consistent node ordering, and phases in the range $3\pi/2\lessapprox \varphi \leq 2\pi$ {yield the same} consistent node ordering.

Nodes 1, 3, and 5 are the outer nodes of the 6-node Sierpinski-gasket lattice (see Figure~\ref{fig:sierpinski}); nodes 2, 4, and 6 are its inner nodes. The order of node centralities that we obtain by calculating generalized eigenvector centrality aligns with the intuitive expectation that nodes 2, 4, and 6 are more central.
\subsection{Circuit Theory}
\label{sec:circuit_theory}
In applications of Kirchhoff's and Ohm's laws to circuit networks in so-called ``nodal analyis''~\cite{HaytCircuitAnalysis}, one examines circuit diagrams like the one in Figure~\ref{fig:examples}(b) to determine the voltages $U_j$ of nodes $j\in\{1,\ldots,N\}$ {with respect to the ground} by solving the node-voltage equations
\begin{equation}
    \begin{pmatrix}
Y_{11} & Y_{12}  &\cdots  & Y_{1N} \\ 
Y_{21} & Y_{22}  &\cdots  & Y_{2N} \\ 
\vdots & \vdots  &\ddots  & \vdots\\ 
Y_{N1} & Y_{N2}  &\cdots  & Y_{NN} 
\end{pmatrix}
\begin{pmatrix}
	U_1\\ 
	U_2\\ 
\vdots\\ 
	U_N
\end{pmatrix} =
\begin{pmatrix}
	I_1\\ 
	I_2\\ 
\vdots\\ 
	I_N
\end{pmatrix}\,,
\end{equation}
where $I_i$ is the net current injected at node $i$ and
\begin{equation}
    Y_{ij} = 
    \begin{cases}
      y_{i} + \sum_{k\neq i} {y_{ik}} & \mbox{if} \quad i = j \\
      -y_{ij}  & \mbox{if} \quad i \neq j\,.
    \end{cases}
\end{equation}
Let $y_{ij}$ denote the admittance between nodes $i$ and $j$. The admittance $y_{ij}$ is equal to the
sum of the admittances of all branches (\ie, paths) that connect nodes $i$ and $j$. The admittance $y_i$ between node $i$ and the ground equals the sum of the admittances of all loads (\ie, a resistor, capacitor, inductor, or combination of these circuit elements between a node and the ground) that are attached to node $i$.

For a branch with admittance $y_{ij}$ that connects nodes $i$ and $j$ , the corresponding impedance is $z_{ij} = y_{ij}^{-1}$.
In an alternating-current (AC) circuit with angular frequency $\omega$, the impedances that are associated with a resistor of resistance $R$, a capacitor of capacitance $C$, and an inductor of inductance $L$ are $R$, $(\ii \omega C)^{-1}$, and $\ii \omega L$, respectively.

The 3-node network in Figure~\ref{fig:examples}(b) has an admittance matrix of
\begin{equation}
    \mathbf{Y} = \begin{pmatrix}
 	 y_{1} + y_{12} + y_{13} & -y_{12} & -y_{13} \\
 	 -y_{12} & y_{2} + y_{12} + y_{23} & -y_{23} \\
 	 -y_{13} & -y_{23} & y_{3} + y_{13} + y_{23} \\
\end{pmatrix}\,.
\end{equation}
Observe the structural similarity between the complex-valued admittance matrix and the combinatorial graph Laplacian matrix~\cite{masuda2017random}.

Consider an AC network with angular frequency $\omega = \SI{1}{\hertz}$ that consists of three nodes that are connected pairwise (\ie, dyadically) by identical series resistor--inductor (RL) elements. Each element has resistance $\SI{1}{\ohm}$ and inductance $\SI{1}{\henry}$. 
The admittance matrix is
\begin{equation}
	\mathbf{Y} = \frac{1}{2}
		\begin{pmatrix}
        			2 & \ii - 1 & \ii - 1 \\
        			\ii - 1 & 2 & \ii - 1 \\
        			\ii - 1 & \ii - 1 & 2
		\end{pmatrix},
\label{eq:circuit}
\end{equation}
where the numerical entries are in units of siemens (S). Additionally, each node is connected to the ground through a capacitor with capacitance $\SI{1}{\farad}$,
 so the admittances $y_1$, $y_2$, $y_3$ all equal $\ii \,\mathrm{S}$.

{The weight matrix~\eqref{eq:circuit} has a complex dominant eigenvalue of $\frac{3}{2} - \frac{\ii}{2}$ and a spectral radius of $1.58$.
Accordingly, the weight matrix does not satisfy any of the PF properties in Table~\ref{tab:generalized_PF}. 
Moreover, by Proposition~\ref{pro:swich-lam1-no}, no weight matrix in the same generalized-switching equivalence class satisfies these properties.
The two-dimensional eigenspace that is associated with the dominant eigenvalue does not have a
permutation-invariant eigenvector. Therefore, no dominant
eigenvector is invariant under the network's permutation symmetry. 
However, the eigenvector that is associated with the magnitude-$1$ eigenvalue $\ii$ has equal entries and is consistent with the network's symmetry. Therefore, this eigenvector yields a meaningful notion of centrality. In Figure~\ref{fig:physics_networks}(b), we show that this choice of centrality assigns the same centrality value to all nodes for all phases $\varphi$ and is thus consistent with the intuition that these circuit elements are structurally equivalent.
\subsection{Mathematical Chemistry}
\begin{figure}
    \centering
    \includegraphics{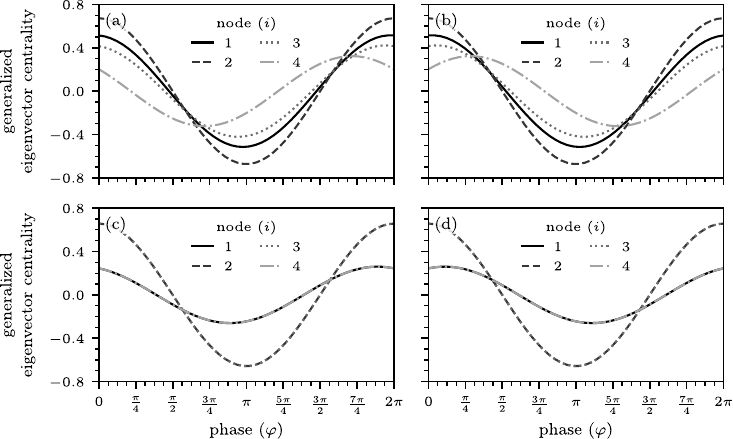}
    \caption{Generalized eigenvector centralities~\eqref{eq:eigen-cent-re} for network representations of the isomeric forms of butene and their corresponding conformers with weight matrices \eqref{eq:chemistry_network} multiplied by the phase factor $\exp(\ii\varphi)$ as functions of the phase $\varphi$.
    }
    \label{fig:chemistry_network}
\end{figure}
In mathematical chemistry, complex edge weights arise in the mathematical description of molecular structures. For example, for the isomeric forms of butene and their corresponding conformers [see Figure~\ref{fig:examples}(c)], researchers have used the weight matrices~\cite{lekishvili1997characterization}
\begin{align}
    \begin{pmatrix} 
    0 & 2 & 0 & \ii \\ 
    2 & 0 & 1 & 0 \\ 
    0 & 1 & 0 & 1 \\
    \ii & 0 & 1 & 0
    \end{pmatrix},\,\,
    \begin{pmatrix} 
    0 & 2 & 0 & -\ii \\ 
    2 & 0 & 1 & 0 \\ 
    0 & 1 & 0 & 1 \\
    -\ii & 0 & 1 & 0
    \end{pmatrix}, \,\,
    \begin{pmatrix} 
    0 & 1 & 0 & \ii \\ 
    1 & 0 & 2 & 0 \\ 
    0 & 2 & 0 & 1 \\
    \ii & 0 & 1 & 0 
    \end{pmatrix},\,\,
    \begin{pmatrix} 
    0 & 1 & 0 & -\ii \\ 
    1 & 0 & 2 & 0 \\ 
    0 & 2 & 0 & 1 \\
    -\ii & 0 & 1 & 0 
    \end{pmatrix} \,.
    \label{eq:chemistry_network}
\end{align}
An edge weight of 1 represents a single bond, and an edge weight of 2 represents a double bond. An edge weight of $\ii$ between nodes 1 and 4 indicates that these nodes are on the same side of the associated network's central edge, and an edge weight of $-\ii$ indicates that they are on opposite sides [see Figure~\ref{fig:examples}(c)]. From left to right, the spectral radii of the four weight matrices are 2.16, 2.16, 2.38, and 2.38.
For each weight matrix, two distinct eigenvalues equal the spectral radius. The eigenvalues are complex, so none of the weight matrices in~\eqref{eq:chemistry_network} satisfies any of the generalized PF properties in Table~\ref{tab:generalized_PF}. As in our example in Section~\ref{sec:circuit_theory}, Proposition~\ref{pro:swich-lam1-no} implies that no weight matrix in the same generalized-switching equivalence class satisfies these properties either. However, one can still select meaningful eigenvectors that are associated with largest-magnitude eigenvalues. Specifically, of the eigenvectors that correspond to the largest-magnitude eigenvalues, we select the one whose entries have positive real parts for $\varphi = 0$. We use this eigenvector to define $\varphi$-dependent eigenvector centralities.

Networks (i) and (ii) in Figure~\ref{fig:examples}(c) suggest an intuitive ordering of node importances, with node $2$ as the most central node and then nodes $1$, $3$, and finally $4$ for both networks. The corresponding weight matrices are, respectively, the first and second matrices in Eq.~\eqref{eq:chemistry_network}. 
Consistent with our intuition, the generalized eigenvector centralities~\eqref{eq:eigen-cent-re} in Figure~\ref{fig:chemistry_network}(a,b) rank the nodes from most central to least central in the order $2$, $1$, $3$, and $4$ for phases in the ranges $0 \le \varphi \lessapprox \pi/4$ and $7\pi/4 \lessapprox \varphi \le 2\pi$.

For networks (iii) and (iv) in Figure~\ref{fig:examples}(c), the generalized eigenvector centralities in Figure~\ref{fig:chemistry_network}(c,d) rank nodes $2$ and $3$ as more central than nodes $1$ and $4$ for phases in the ranges 
$0 \le \varphi \lessapprox \pi/4$ and $7\pi/4 \lessapprox \varphi \le 2\pi$. The corresponding weight matrices are, respectively, the third and fourth matrices in Eq.~\eqref{eq:chemistry_network}. Our eigenvector-based node ordering again aligns with the intuitive ordering from the network visualizations in Figure~\ref{fig:examples}(c).
\subsection{Social and Communication Networks}
\label{sec:social_comm_networks}
\begin{figure}
    \centering
    \includegraphics{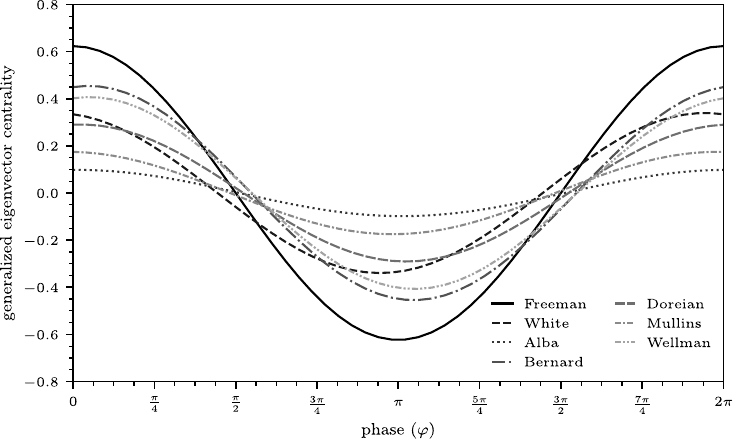}
    \caption{Generalized eigenvector centralities~\eqref{eq:eigen-cent-re} of the communication network [see Figure~\ref{fig:examples}(e)] with weight matrix~\eqref{eq:communication_network} multiplied by the phase factor $\exp(\ii\varphi)$ as functions of the phase $\varphi$.}
    \label{fig:communication_network}
\end{figure}
{
As an example of a communication network, we study a subset of the interactions in the Electronic Information Exchange System (EIES)~\cite{freeman1997uncovering} [see Figure~\ref{fig:examples}(e)]. In accordance with \cite{hoser2005eigenspectral}, we consider the weight matrix
\begin{equation}
  {\small  \mathbf{W} = \begin{pmatrix}
    0 & 115 + 84\ii & 17 + 16\ii & 93 + 127\ii & 53 + 57\ii & 33 + 23\ii & 84 + 118\ii \\
    84 + 115\ii & 0 & 4 + 10\ii & 5 + 22\ii & 5 + 9\ii & 4\ii & 15 + 24\ii \\
    16 + 17\ii & 10 + 4\ii & 0 & 15 + 17\ii & 3 + 4\ii & 3 + 3\ii & 4 + 5\ii \\
    127 + 93\ii & 22 + 5\ii & 17 + 15\ii & 0 & 57 + 57\ii & 12 + 9\ii & 34 + 35\ii \\
    57 + 53\ii & 9 + 5\ii & 4 + 3\ii & 57 + 57\ii & 0 & 8 + 8\ii & 10 + 15\ii \\
    23 + 33\ii & 4 & 3 + 3\ii & 9 + 12\ii & 8 + 8\ii & 0 & 33 + 45\ii \\
    118 + 84\ii & 24 + 15\ii & 5 + 4\ii & 35 + 34\ii & 15 + 10\ii & 45 + 33\ii & 0
    \end{pmatrix}}\,,
    \label{eq:communication_network}
\end{equation}
where the entries $w_{ij} = x_{ij} + \ii y_{ij}$ of $\mathbf{W}$ equal the sum of the number $x_{ij}$ of outbound messages from node $i$ to node $j$ and the number $y_{ij}$ of inbound messages from node $j$ to node $i$. A related example (which uses the same definitions of in-edges and out-edges) of a network with an electric layer and a communication layer was described in~\cite{sanchez_infrastructure}. An equivalent directed, real-valued representation of~\eqref{eq:communication_network} requires two weight matrices and permits only a limited spectral interpretation. Conveniently, a formulation with complex edge weights allows one to use a single matrix that one can transform into a Hermitian matrix with real eigenvalues and orthogonal eigenvectors, thereby enabling a readily interpretable spectral analysis of this asymmetric communication network.\footnote{A related approach is to encode
edge directionality via complex phases to obtain a Hermitian matrix representation of a directed network~\cite{DBLP:conf/pkdd/FurutaniSAHA19, huang2026}.}

The spectral radius of the weight matrix~\eqref{eq:communication_network} is 338.03. In Figure~\ref{fig:communication_network}, we show the generalized eigenvector centralities \eqref{eq:eigen-cent-re} of~\eqref{eq:communication_network} as functions of the phase $\varphi$. For phases in the ranges $0\leq \varphi \lessapprox  \pi/4$ and $7\pi/4\lessapprox \varphi \leq 2\pi$, the generalized eigenvector centralities are consistent with the intuitive ordering from the network visualization in Figure~\ref{fig:examples}(e). For example, the most central node for communication with the other individuals in the examined EIES subnetwork is Linton Freeman, the second-most central node is H.\ Russell Bernard, and the third-most center node is Barry Wellmann. These three scholars are extremely prominent figures in social-network analysis.

The entries of the weight matrix~\eqref{eq:communication_network} satisfy $w_{ij} = \ii \bar{w}_{ji}$, where $\bar{w}_{ji}$ is the complex conjugate of $w_{ij}$. One can transform {a weight matrix whose entries satisfy $w_{ij} = \ii \bar{w}_{ji}$}
into a Hermitian matrix by multiplying it by the phase factor $\exp(-\ii \pi/4)$~\cite{hoser2005eigenspectral}. The resulting {transformed version of the weight matrix~\eqref{eq:communication_network}} has real eigenvalues, with the largest eigenvalue equal to $338.03$. The real parts of all dominant-eigenvector entries are positive, so the transformed weight matrix satisfies the strong complex PF property. In the Supplemental Material, we demonstrate for any network with $N = 2$ nodes whose weight matrix satisfies $w_{ij} = \ii\bar{w}_{ji}$ that the transformed Hermitian matrix that one obtains by multiplication with $\exp(-\ii\pi/4)$ satisfies the complex PF property. We also give a counterexample for $N = 3$ and show that the complex PF property holds for sufficiently small skew-symmetric perturbations for $N\geq 3$.
\begin{figure}
    \centering
    \includegraphics{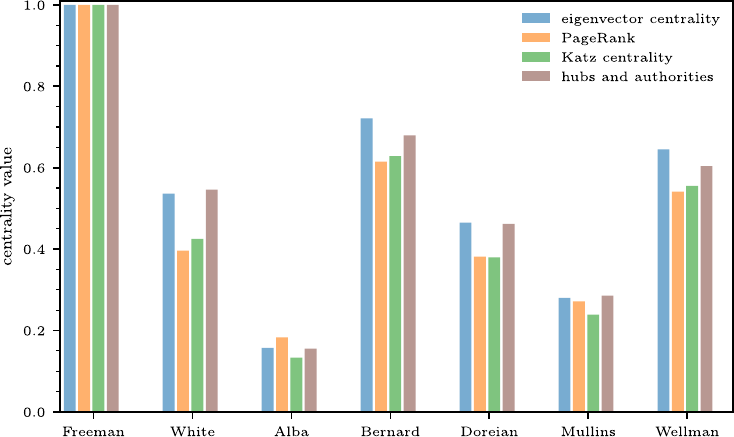}
    \caption{Comparison of different centrality measures for the communication network [see Figure~\ref{fig:examples}(e)] with weight matrix~\eqref{eq:communication_network}. We show generalized eigenvector centrality, generalized PageRank (with teleportation parameter {$a = 0.85$}), generalized Katz centrality (with downweighting factor {$\alpha = 10^{-3}$}), and generalized hub and authority centralities {(with parameter $\gamma = 1$)}.
    In all of these centrality computations, the phase parameter is $\varphi = 0$. We obtain centrality values using the real parts of generalized centralities in \eqref{eq:general_ev}, \eqref{eq:gen_hubs_auth}, \eqref{eq:pagerank}, and \eqref{eq:gen_katz}. 
         We normalize these centralities by their maxima to lie in the interval~$[0,1]$.
    }
    \label{fig:centrality_values}
\end{figure}
In Figure~\ref{fig:centrality_values}, we show generalized eigenvector centrality, generalized PageRank (with teleportation parameter {$a = 0.85$}), generalized Katz centrality (with downweighting factor {$\alpha = 10^{-3}$}), and generalized hub and authority centralities {(with parameter $\gamma = 1$)}.
In all of these centrality computations, the phase parameter is $\varphi = 0$.
We normalize the generalized centralities values by their respective maxima so that they lie in the interval~$[0,1]$. 
The examined centralities produce consistent node rankings. For the weight matrix $\mathbf{W}$ in~\eqref{eq:communication_network}, the generalized hub centrality $\mathbf{c}_{\rm H,c}$ and the generalized authority centrality $\mathbf{c}_{\rm A, c}$ have the same real part. Replacing $\mathbf{W}\mathbf{W}^\top$ and $\mathbf{W}^\top\mathbf{W}$ by $\mathbf{W}\mathbf{W}^\dagger$ and $\mathbf{W}^\dagger\mathbf{W}$, respectively, for generalized hub and authority centralities yields normalized values that {are similar to} those from generalized eigenvector centrality.
\section{Conclusions and Discussion}
\label{sec:discussion}
Perron--Frobenius (PF) theory is a cornerstone in the study of weighted networks. It underpins centrality measures such as eigenvector centrality, PageRank, and hubs and authorities. 
However, the traditional PF theorem, which was developed in the early 1900s by Georg Ferdinand Frobenius~\cite{frobenius1912} and was based on earlier work by Oskar Perron~\cite{perron1907}, applies only to non-negative matrices and not to complex-valued weight matrices, which arise in quantum information, quantum chemistry, electrodynamics, machine learning, and other fields.

In the present paper, we reviewed several generalizations of the traditional PF theorem (see Table~\ref{tab:generalized_PF}), such as ones by Noutsos and Varga~\cite{noutsos2012perron} and one by Rump~\cite{rump2003perron}. 
The computational demands of Rump's approach motivated us to focus primarily on the approaches of Noutsos and Varga.

One of our key insights is that a generalized-switching equivalence class with a weight matrix that satisfies the complex PF property yields a set of weight matrices that satisfy the complex PF property. Moreover, the equivalence class must also have a weight matrix that satisfies the PF property (which is stricter than the complex PF property). The same statements hold when we replace the complex PF and PF properties by their strong complex PF and strong PF counterparts, respectively.

We also generalized eigenvector centrality and other eigenvector-based centrality measures to networks with complex edge weights, and we calculated them on real-world networks from quantum physics, circuit theory, mathematical chemistry, and social and communication networks.

Given the broad relevance of generalized PF theory, there are many interesting avenues for future research. One relevant direction is to extend our approach to locally biased centrality measures, such as personalized PageRank~\cite{gleich2015pagerank,anderson2006,jeub2015}, in which the teleportation strategy depends on the starting location of a random walker. It is also worthwhile to generalize our approach to temporal and multilayer centrality measures~\cite{taylor2017,taylor2021tunable, taylor2023}, including variants of PageRank that can incorporate both node teleportation and layer teleportation. 
Another promising direction is to extend our results to non-normal matrices and their pseudospectra~\cite{trefethen2020spectra}. 
The pseudospectrum of a weight matrix $\mathbf{W}$ captures the eigenvalues of matrices that one obtains by small perturbations of $\mathbf{W}$.
Studying the eigenvectors of these nearby matrices may yield additional candidate centrality vectors, and some of them may satisfy generalized PF-type properties.
Another promising direction is to investigate the spectral properties of other matrices, such as graph Laplacians, that are associated with networks~\cite{zhang2011laplacian}. Yu et al.~\cite{yu2025directedgraphsreallaplacian} recently identified both structural properties of directed networks that guarantee a purely real combinatorial-Laplacian spectrum and structural properties that yield complex eigenvalues.
It is also useful to  closely examine the uniqueness of eigenvectors that are associated with PF generalizations and to study the relevance of bulk eigenvectors (\ie, eigenvectors that are not associated with extremal eigenvalues) in network analysis~\cite{cucuringu2011localization,masuda2022dimension,bottcher2026graph}. 
\section*{Acknowledgements}
We thank Rajasekhar Anguluri, Gregory Hemenway, Aditi Saxena, and Twinkle Tripathy for helpful comments.
%


\bibliographystyle{siamplain}
\bibliography{refs-v10.bib}

\clearpage
\includepdf[pages=-]{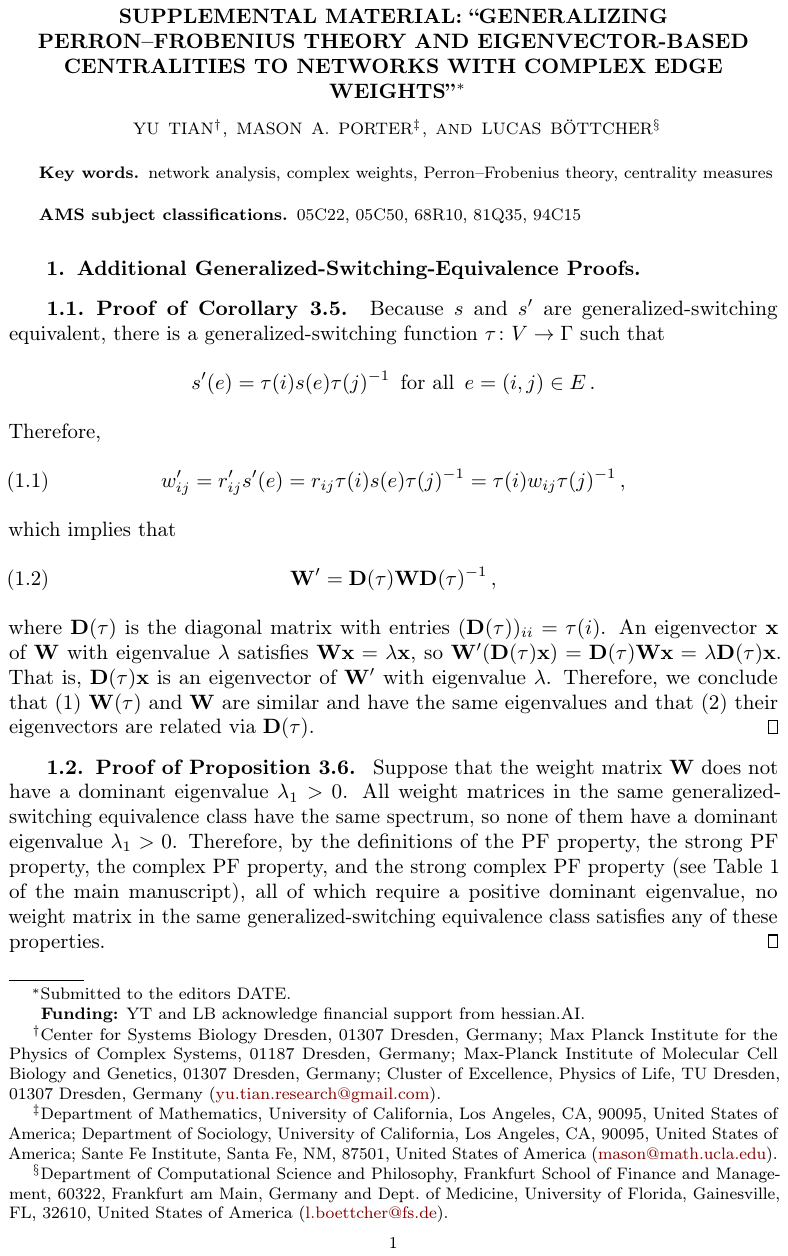}

\end{document}